\newtheorem{theorem}{Theorem}
\newtheorem{lemma}{Lemma}
\newtheorem{corollary}{Corollary}
\newtheorem{assumption}{Assumption}
\newcommand{\texp}{t_*}
\newcommand{\dtilde}[1]{\breve{#1}}
\let\given\givenbase
\DeclareMathOperator{\cov}{cov}
\DeclareMathOperator{\gammadistr}{Gamma}
\DeclareMathOperator{\argmax}{argmax}
\begin{document}

\title{Simulation of particle systems interacting through hitting times} 
\author{Vadim Kaushansky\thanks{The first author gratefully acknowledges support from the Economic and Social Research Council and Bank of America Merrill Lynch.}
\footnote{\emph{Corresponding author}, Mathematical Institute \& Oxford-Man Institute, University of Oxford, Andrew Wiles Building, Woodstock Rd, Oxford, OX2 6GG, UK, E-mail: 
vadim.kaushansky@maths.ox.ac.uk}, Christoph Reisinger\footnote{Mathematical Institute  \& Oxford-Man Institute, University of Oxford, Andrew Wiles Building, Woodstock Rd, Oxford, OX2 6GG, UK, E-mail: christoph.reisinger@maths.ox.ac.uk}}    
\date{}
   
\maketitle 
\begin{abstract}
We develop an Euler-type particle method for the simulation of a McKean--Vlasov equation arising from a mean-field model with positive feedback from hitting a boundary. Under  assumptions on the parameters which ensure differentiable solutions, we establish convergence of order $1/2$ in the time step. Moreover, we give a modification of the scheme using Brownian bridges and local mesh refinement, which improves the 
order to $1$. We confirm our theoretical results with numerical tests and empirically investigate cases with blow-up.
\end{abstract}

\noindent
{\bf Keywords:} McKean-Vlasov equations, particle method, timestepping scheme, Brownian bridge. \\
\medskip

\section{Introduction}

There has been a recent surge in interest in mean-field problems, both from a theoretical and applications perspective.
We focus on models where the interaction derives from feedback on the system when a certain threshold is hit.
Application areas include electrical surges in networks of neurons and systemic risk in financial markets.
As analytic solutions are generally not known, numerical methods are inevitable, but still lacking.

We therefore propose and analyse numerical schemes for the simulation of a specific McKean--Vlasov equation which exhibits
key features of these models, namely
\begin{align}
	Y_t &= Y_0 + W_t - \alpha L_{t}, & t \in [0,T], \label{mckean-vlasov1} \\
	L_t &= \mathbb{P}(\tau \le t), & t \in [0,T], \label{mckean-vlasov_L}\\
	\tau &= \inf \{t \in [0, T]: Y_t \le 0 \} \label{mckean-vlasov2},
\end{align}
where $\alpha, T \in \mathbb{R}_{+}$, $W$ a standard Brownian motion on a probability space $(\Omega,\mathcal{F}, (\mathcal{F}_t)_{t\ge 0}, \mathbb{P})$, on which is also given an $\mathbb{R}_{+}$-valued random variable $Y_0$ independent of $W$.
The non-linearity arises from the dependence of $L_t$ in (\ref{mckean-vlasov1}) on the law of $Y$. More specifically,
if $t\rightarrow L_t$ has a derivative $p_\tau$, which is then the density of the hitting time $\tau$ of zero,
$$
dY_t = dW_t - \alpha \ p_\tau(t) \, dt, \qquad t \in (0,T),
$$
so that the drift depends on the law of the path of $Y$.

Theoretical properties of \eqref{mckean-vlasov1}--\eqref{mckean-vlasov2}  have been studied in \cite{Andreas}, who prove the existence of a differentiable solution $(L_t)_{0<t<\texp}$ up to an ``explosion time''
$\texp$. Conversely, they show that $L$ cannot be continuous for all $t$ for $\alpha$ above a threshold determined by the law of $Y_0$.
Such systemic events where discontinuities occur are also referred to as ``blow-ups'' in the literature.

The question of the constructive solution, however, remained open.
Examples of a numerical solution computed with Algorithm \ref{algo1} introduced in Section \ref{sec:main} are shown in Figure \ref{jump_err12}.
The left plot shows the formation of a discontinuity in the loss function $t\rightarrow L_t$ for increasing $\alpha$, with $Y_0 \sim \mathrm{Gamma}(1.5,0.5)$.
The density of $Y_T$ for $T$ before and after the shock is displayed in the right panel.
\begin{figure}[H]
	\begin{center}
		\subfloat[]{\includegraphics[width=0.5\textwidth]{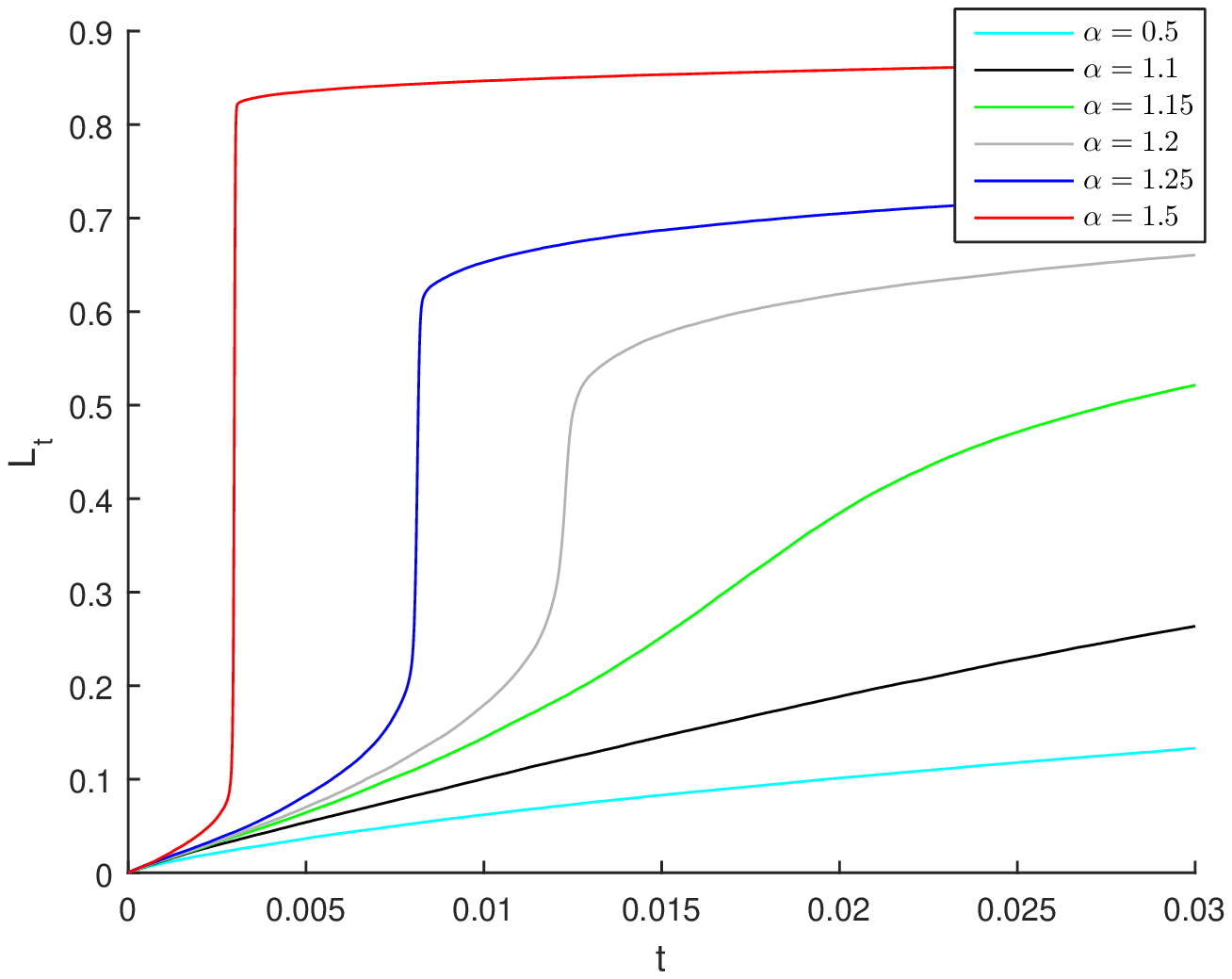}}
		\subfloat[]{\includegraphics[width=0.5\textwidth]{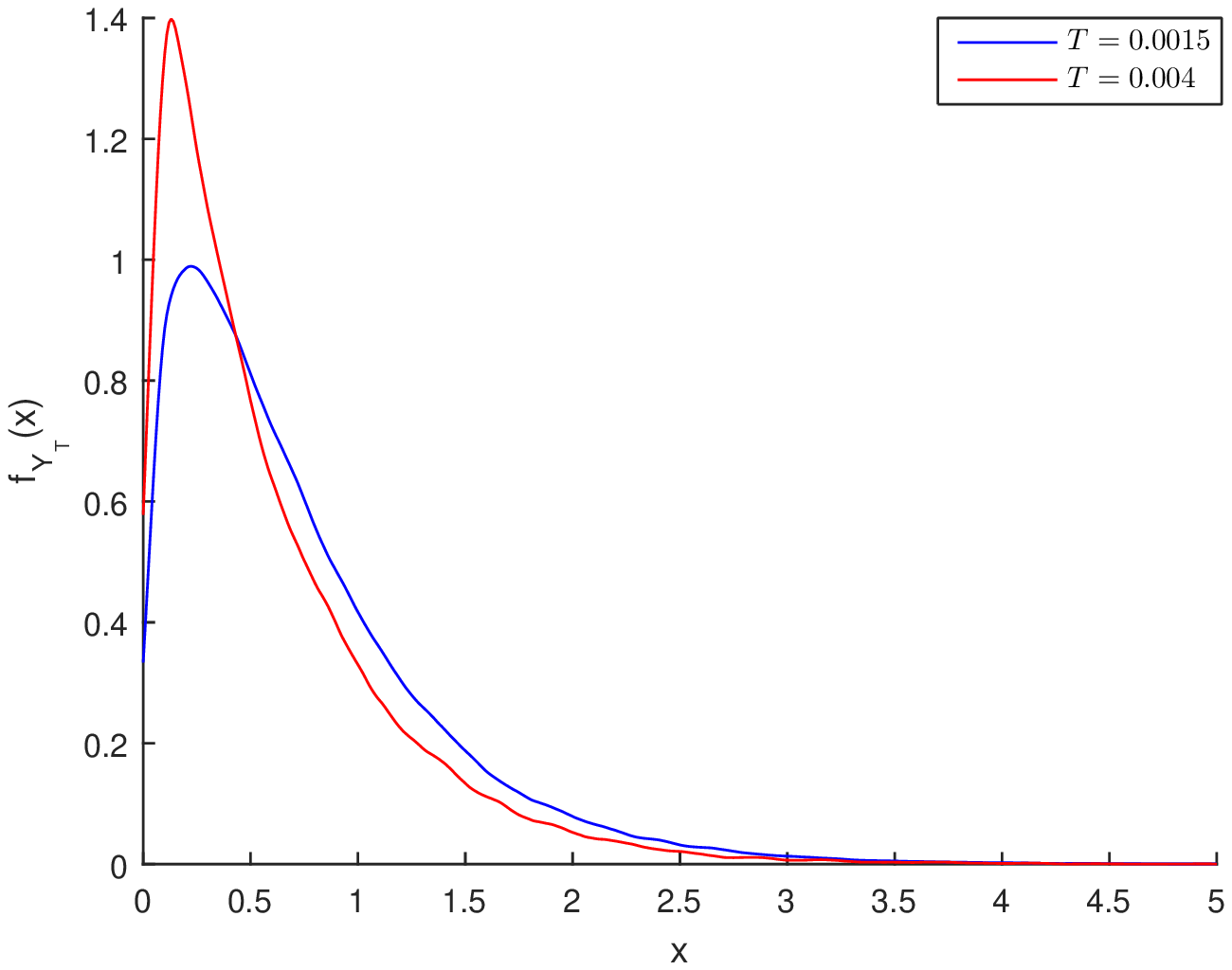}}
	\end{center}
	\vspace{-20pt}
	\caption{(a) $L_t$ for different $\alpha$ near the jump (b) Distribution of $Y_T$ for $Y_T > 0$ before and after the jump. Fitted by kernel density estimation with normal kernel for $N = 10^7$. }
 	\label{jump_err12}
\end{figure}

A similar model has been studied in \cite{Shkolnikov}, where the authors 
consider $\log(1 - L_t)$ instead of $-L_t$ in \eqref{mckean-vlasov1}. Our numerical scheme can be applied in principle to this problem, but we concentrate the analysis on \eqref{mckean-vlasov1}--\eqref{mckean-vlasov2} in this paper. 

One motivation for studying 
these equations comes from mathematical finance, in particular, systemic risk. A large interconnected banking network can be approximated by a particle system with interactions 
by which the default of one firm, modeled as the hitting of a lower default threshold of its value, causes a downward move in the firm value of others. More details can be found in \cite{Andreas} and \cite{Shkolnikov}. This model can also be viewed as the large pool limit of a structural default model for a pool of firms where interconnectivity is caused by mutual liabilities, such as in \cite{Lipton2015}.

An earlier version this problem is found in neuroscience, where a large network of electrically coupled neurons can be described by McKean--Vlasov type equations (\cite{caceres2011analysis, carrillo2013classical,  delarue2015particle, delarue2015global}). 
If a neuron's potential reaches some fixed threshold, it jumps to a higher potential level and sends a signal to other neurons.
This feedback leads to the following equations
\begin{eqnarray}
	\label{neuro_X}
		X_t &=& X_0 + \int_0^t b(X_s) \, d s + \alpha \mathbb{E}[M_t] + W_t - M_t, \\
		M_t &=& \sum_{k \ge 1} \mathbbm{1}_{[0, t]}(\tau_k), \quad \tau_k = \inf \{ t > \tau_{k-1}: X_{t-} \ge 1\},
		\label{neuro_M}
\end{eqnarray}
where $X_0 < 1$ a.s. The similarity to \eqref{mckean-vlasov1}--\eqref{mckean-vlasov2} is seen by noticing that $L_t = \mathbb{E}[\mathbbm{1}_{[0, t]}(\tau)]$
and $M_t$ in (\ref{neuro_M}) is constant between hitting times, however,
while in (\ref{neuro_X}) an upper boundary is hit and after that the value resets to zero, in our model we are interested in hitting the zero boundary (from above), and after hitting the particle's value remains zero.

While McKean--Vlasov equations are an active area of current research, 
to our knowledge, there is only a fairly small number of papers where their simulation is studied rigorously and none of these encompass the models above.


Early works include \cite{bossy1997stochastic}, which proves convergence (with order 1/2 in the timestep and inverse number of particles)
of a particle approximation to the distribution function for the measure $\nu_t$ of $X_t$ in
the classical McKean-Vlasov equation
\begin{eqnarray}
\label{mckean-vlasov}
X_t = X_0 + \int_{\mathbb{R}} \beta(X_t,u) \ \nu_t(du) \ dt +  \int_{\mathbb{R}} \alpha(X_t,u) \ \nu_t(du) \, dW_t
\end{eqnarray}
with sufficient regularity.
The proven rate in the timestep is improved to 1 in \cite{antonelli2002rate} using Malliavin calculus techniques.

More recently, multilevel simulation algorithms have been proposed and analysed:
\cite{ricketson2015multilevel} considers the special case of an SDE whose coefficients at time $t$ depend on $X_t$ and the expected value of a function of $X_t$; \cite{szpruch2017iterative} study a method based on fixed point iteration for the general case \eqref{mckean-vlasov}.
An alternative variance reduction technique by importance sampling is given in \cite{reis2018importance}.


The system \eqref{mckean-vlasov1}--\eqref{mckean-vlasov2} above does not fall into the setting of \eqref{mckean-vlasov} due to the extra path-dependence of the coefficients through the hitting time distribution.
In this paper, we therefore propose and analyse a particle scheme for \eqref{mckean-vlasov1}--\eqref{mckean-vlasov2} with an explicit timestepping scheme for the nonlinear term.
We simulate $N$ exchangeable particles at discrete time points with distance $h$, whereby at each time point $t$ we use an estimator of $L_{t-h}$ from the previous particle locations to approximate \eqref{mckean-vlasov2}. 
We prove the convergence of the numerical scheme up to some time $T$ as the time step $h$ goes to zero and number of particles goes to infinity.
The scheme can be extended up to the explosion time under certain conditions on the model parameters.
The order in $h$ for this standard estimator is 1/2. 
Next, we use Brownian bridges to better approximate the hitting probability, similar to barrier option pricing (\cite{glasserman2013mc}). In this case, the convergence rate improves to $(1 + \beta)/2$, where $\beta \in (0, 1]$ is the H\"older exponent of the density of the initial value $Y_0$ (e.g., 0.5 in the example from Figure \ref{jump_err2}).
The order can be improved to 1 by non-uniform time-stepping.

A main contribution of the paper is the first provably convergent scheme for equations of the type \eqref{mckean-vlasov1}--\eqref{mckean-vlasov2}. 
The analysis uses a direct recursion of the error and regularity results proven in \cite{Andreas}. This has the advantage that sharp convergence orders -- i.e., consistent with the numerical tests -- can be given, but also means that it seems difficult to apply the analysis directly to variations of the problem where such results are not available.
Nonetheless, the method itself is natural and applicable in principle to other settings such as those outlined above.

The rest of the paper is organized as follows. In Section \ref{sec:main} we list the running assumptions and state the main results of the paper; 
in Sections \ref{sec:conveul} and \ref{subsec:ext} we prove the uniform convergence of the discretized process; 
in Section \ref{sec:MC}, we show the convergence of Monte Carlo particle estimators with an increasing number of samples;
in Section \ref{sec:BB} we prove the convergence order for the scheme with Brownian bridge;
in Section \ref{sec:numer} we give numerical tests of the schemes;
finally, in Section \ref{sec:concl}, we conclude.

\section{Assumptions and main results}
\label{sec:main}

We begin by listing the assumptions.
The first one, H\"older continuity at 0 of the initial density, is key for the regularity of the solution. The H\"older exponent will also limit
the rate of convergence of the discrete time schemes.
\begin{assumption}
	\label{assumption_Y0}
	We assume that $Y_0$ has a density $f_{Y_0}$ supported on $\mathbb{R}_+$ such that 
	\begin{equation}
		\label{Y_0_assumption}
		f_{Y_0}(x) \le B x^{\beta}, \qquad x \ge 0
	\end{equation}
	 for some $\beta \in (0, 1]$. 
\end{assumption}
Under Assumption \ref{assumption_Y0}, we can refer to Theorem 1.8 in \cite{Andreas} for the existence of  a unique, differentiable solution $t\rightarrow L_t$ for 
\eqref{mckean-vlasov1}--\eqref{mckean-vlasov2} up to time
	\begin{equation*}
		\texp := \sup{ \{t > 0: ||L||_{H^1(0, t)} < \infty \} } \in [0, \infty],
	\end{equation*}
	and a corresponding $\hat{B}$ such that for every $t <\texp$
	\begin{equation}
		\label{L_t_prime_ineq}
	 	L_t' \le \hat{B} t^{-\frac{1-\beta}{2}} \ a.e.
	 \end{equation}
This estimate admits a singularity of the rate of losses at time 0, however, what is actually observed in numerical studies (see Figure \ref{jump_err12}, left) is that the loss rate is bounded initially but then has a sharp peak for small $\beta$ (and then especially for large $\alpha$).

Integrating \eqref{L_t_prime_ineq}, we have for future reference a bound for $L_t$,
\begin{equation}
	\label{L_t_ineq}
	L_t \le \tilde{B} t^{\frac{1 + \beta}{2}},
\end{equation}
where $\tilde{B} = 2 \hat{B}/({1 + \beta})$.

The following assumption will be used to control the propagation of the discretisation error, by bounding the density (especially at 0)
of the running minimum of $Y$ and its approximations.
\begin{assumption}
	\label{assumption_T}
	We assume that $T < \min(T^{*}, \texp)$,
	where $T^{*}$ is defined by
	\begin{equation}
		\label{theorem_condition}
		\alpha B \left[\sqrt{\frac{2 T^{*}}{\pi}} + \alpha \tilde{B} (T^{*})^{\frac{1 + \beta}{2}}  \right]^{\beta} = 1,
	\end{equation}	
	with $B$ and $\tilde{B}$ the smallest constants such that \eqref{Y_0_assumption} and \eqref{L_t_ineq} hold for given $\beta$.
\end{assumption}
In the following, we assume that Assumptions \ref{assumption_Y0} and \ref{assumption_T} hold.
Consider a uniform time mesh $0 = t_0 < t_1 < \ldots < t_n = T$, where $t_i - t_{i-1} = h$, and a discretized process,
for $1\le i\le n$,
\begin{eqnarray}
	\tilde{Y}_{t_i} &=& Y_0 + W_{t_i} - \alpha \tilde{L}_{t_i}, \label{discr_process1}\\
	\tilde{L}_{t_i} &=& \mathbb{P}(\tilde{\tau} < t_i), \\
	\tilde{\tau} &=& \min_{0 \le j \le n} \{\tilde{Y}_{t_j} \le 0\} \label{discr_process2}.
\end{eqnarray}
We extend $\tilde{L}_{t_i}$ to $[0, T]$ by setting $\tilde{L}_s = \tilde{L}_{t_{i-1}}$ for $t_{i-1} < s < t_i$.

The first theorem, proven in Section \ref{sec:conveul}, shows that $\tilde{L}_{t}$ converges uniformly to $L_{t}$. 

\begin{theorem}
	\label{explicit_theorem}
	Consider $\tilde{L}_{t_i}$ from \eqref{discr_process1}--\eqref{discr_process2} and $L_t$ from \eqref{mckean-vlasov1}--\eqref{mckean-vlasov2}. Then, for any $\delta > 0$, there exists $C > 0$ independent of $h$ such that 
	\begin{equation}
		\label{theor1_result}	
		\max_{i \le n} | \tilde{L}_{t_i} - L_{t_i} | \le C h^{\frac{1}{2} - \delta}.
	\end{equation}
\end{theorem}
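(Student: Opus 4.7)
My plan is to couple the true process $Y$ and its Euler discretisation $\tilde Y$ via the same $(Y_0,W)$ and close a Gronwall-type recursion on $\epsilon_i := \max_{j\le i}|\tilde L_{t_j}-L_{t_j}|$, with the contraction factor supplied by Assumption~\ref{assumption_T}. Introducing the auxiliary loss
\[
\hat L_{t_i} := \mathbb{P}\bigl(\min_{0 \le j < i} Y_{t_j} \le 0\bigr),
\]
the hitting probability of the \emph{true} process observed only on the grid, the triangle inequality splits $|\tilde L_{t_i}-L_{t_i}| \le |\tilde L_{t_i}-\hat L_{t_i}| + |\hat L_{t_i}-L_{t_i}|$ into a feedback-coupling piece and a pure hitting-time discretisation piece.

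For the discretisation piece I would use a standard Brownian-bridge estimate: conditionally on $(Y_{t_{j-1}},Y_{t_j}) = (a,b)$ with $a,b>0$, the probability that $Y$ crosses zero in $(t_{j-1},t_j]$ is $\exp(-2ab/h)$. Integrating against the joint density of $(Y_{t_{j-1}},Y_{t_j})$ and summing over $j\le n$, while using the regularity bound (\ref{L_t_prime_ineq}) from \cite{Andreas} on $L_t'$ to control the density of $Y$ near $0$, yields $|\hat L_{t_i}-L_{t_i}|\le Ch^{1/2-\delta}$; polylogarithmic factors from the $t^{-(1-\beta)/2}$ singularity of $L'$ at $t=0$ are absorbed into any prescribed $\delta>0$.

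For the coupling piece, the identity $\tilde Y_{t_j}-Y_{t_j}=-\alpha(\tilde L_{t_j}-L_{t_j})$ implies $|\tilde Y_{t_j}-Y_{t_j}|\le\alpha\epsilon_i$ for every $j\le i$, so both directions of the symmetric difference of $\{\min_{j<i}\tilde Y_{t_j}\le 0\}$ and $\{\min_{j<i}Y_{t_j}\le 0\}$ lie in $\{\min_{j<i}Y_{t_j} \in [-\alpha\epsilon_i,\alpha\epsilon_i]\}$. Using the sharper bound $|\mathbb{P}(A)-\mathbb{P}(B)| \le \max(\mathbb{P}(A\setminus B),\mathbb{P}(B\setminus A))$ rather than the sum (this matters for getting the constant right), one can write
\[
|\tilde L_{t_i}-\hat L_{t_i}| \le \alpha\epsilon_i \cdot \sup_m f_{\min_{j<i}Y_{t_j}}(m).
\]
Since $Y_{t_j} = Y_0 + W_{t_j} - \alpha L_{t_j}$ with deterministic $L_{t_j}$, Jensen's inequality applied to $f_{Y_0}(x)\le Bx^\beta$, combined with the reflection principle (after a Girsanov change of measure to remove the bounded drift $-\alpha L_s'$), gives a bound on the running-minimum density of the form $B(\sqrt{2t/\pi}+\alpha L_t)^\beta$; using (\ref{L_t_ineq}) and the definition (\ref{theorem_condition}) of $T^*$ together with $T<T^*$,
\[
\alpha \cdot \sup_m f_{\min_{j<i}Y_{t_j}}(m) \le \alpha B \bigl(\sqrt{2T/\pi} + \alpha\tilde B\,T^{(1+\beta)/2}\bigr)^\beta =: q < 1.
\]
Hence $|\tilde L_{t_i}-\hat L_{t_i}| \le q\epsilon_i$; combined with the discretisation bound this gives $\epsilon_n \le q\epsilon_n + Ch^{1/2-\delta}$, which rearranges (since $q<1$) to the claim.

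The main obstacle, I expect, is producing the density estimate for the running minimum of $Y$ with the sharp constant from (\ref{theorem_condition}): the marginal-density bound via Jensen on $f_{Y_0}$ is immediate, but lifting it to the running-minimum density requires the Brownian reflection principle together with a Girsanov argument to absorb the drift, and the resulting estimate must be tight enough that Assumption~\ref{assumption_T} actually produces a strict contraction. The second subtlety, uniformity of $q$ in the time step $h$, follows because the density estimate depends on $T$ and the model parameters only, not on the grid.
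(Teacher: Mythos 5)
Your overall architecture matches the paper's: couple $Y$ and $\tilde Y$ through the same $(Y_0,W)$, split the error at the grid-hitting-probability $\hat L_{t_i}=\mathbb{P}(\min_{j<i}Y_{t_j}\le 0)$ into a discretisation-bias term $|\hat L_{t_i}-L_{t_i}|$ and a feedback term $|\tilde L_{t_i}-\hat L_{t_i}|$, bound the feedback term by a density estimate on the running minimum, and close a recursion using the contraction $q=\alpha B\bigl(\sqrt{2T/\pi}+\alpha\tilde B T^{(1+\beta)/2}\bigr)^\beta<1$ supplied by Assumption~\ref{assumption_T}. The feedback piece, the Jensen-based density bound, and the fixed-point closing are essentially the paper's Lemma~\ref{density_estimate} and the induction in Section~\ref{sec:conveul} (the Girsanov step you mention is unnecessary since the drift $-\alpha L'_t$ is deterministic; the paper just uses $L_s\le L_{t_i}$ and the reflection principle inside the Jensen step). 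A small indexing point: $\tilde L_{t_i}$ depends only on $\tilde Y_{t_j}$ for $j<i$, so the contraction should run through $\epsilon_{i-1}$, not $\epsilon_i$; the recursion still closes as you intend.

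The genuine gap is the discretisation-bias bound $|\hat L_{t_i}-L_{t_i}|\le Ch^{1/2-\delta}$. Your plan — integrate the Brownian-bridge crossing probability $e^{-2ab/h}$ against the joint law of $(Y_{t_{j-1}},Y_{t_j})$ and sum over $j$ — does not go through with only a \emph{bounded} density near the barrier. A direct computation (substitute $a=\sqrt h u$, $b=\sqrt h v$, use that $Y_{t_j}-Y_{t_{j-1}}$ has Gaussian scale $\sqrt h$) gives a per-interval contribution of order $\sqrt h$, and summing over $n=T/h$ intervals diverges like $h^{-1/2}$. To rescue the estimate you would need the density of the grid process \emph{conditioned on having stayed positive} to vanish linearly at the boundary, with the right $t_j$-dependence — a considerably stronger statement than the paper's Lemma~\ref{density_estimate}, which gives only an $O(1)$ upper bound, and one that you do not establish (the cited bound~\eqref{L_t_prime_ineq} controls $L'_t$, i.e.\ the hitting-time density, not the spatial density of the surviving particle at the barrier). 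The paper sidesteps this entirely: it bounds the random quantity $\min_{j<i}Y_{t_j}-\inf_{s<t_i}Y_s$ distributionally via a modified Asmussen--Glynn--Pitman result (Lemma~\ref{lemma_asmussen}), then converts to a probability bound via Markov's inequality plus the same $O(1)$ density estimate and an optimisation over $\varepsilon$ (Lemma~\ref{lemma_1_2_delta}). You should replace your Brownian-bridge step with an argument of that type, or prove the missing linear boundary decay; as written, the bridge route is the one place the proof does not close.
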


We now propose a particle simulation scheme for \eqref{discr_process1}--\eqref{discr_process2} in Algorithm \ref{algo1}.
\begin{algorithm}[H]
	\caption{Discrete time Monte Carlo scheme for simulation of the loss process}
	\begin{algorithmic}[1]
		\Require{$N$ --- number of Monte Carlo paths}
		\Require{$n$ --- number of time steps: $0 < t_1 < t_2 < \ldots < t_n$}
		\State{Draw $N$ samples of $Y_0$ (from initial distribution) and  $W$ (a Brownian path)}
		\State{Define $\hat{L}_0 = 0$}
		\For{$i = 1:n$}
			\State{Estimate $\tilde{L}_{t_i}$ by $\hat{L}_{t_i}^N=\frac{1}{N}\sum_{k = 1}^N  \mathbbm{1}_{\{ \min_{j < i} \hat{Y}_{t_j}^{(k)}  \le 0 \}}$}
			\For{$k = 1:N$}
			\State{Update $\hat{Y}_{t_i}^{(k)} = Y_0^{(k)} + W_{t_i}^{(k)} - \alpha \hat{L}_{t_i}^N$}
			\EndFor
		\EndFor
	\end{algorithmic}
	\label{algo1}
\end{algorithm}

In Section \ref{sec:MC}, we prove convergence in probability of Algorithm \ref{algo1}  
as $N \to \infty$. 
\begin{theorem}
	\label{mc_theorem}
	For all $i \le n$,
	\begin{equation}
		\label{theorem_mc_result}	
		\hat{L}_{t_i} \xrightarrow[N \to \infty]{\mathbb{P}} \tilde{L}_{t_i}.
	\end{equation}
\end{theorem}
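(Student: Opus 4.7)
The plan is to prove Theorem \ref{mc_theorem} by induction on $i$. The base case $i=0$ is immediate since $\hat{L}_0^N = 0 = \tilde{L}_0$ by construction. The main obstacle is that the particles $(\hat{Y}^{(k)})_{k=1}^N$ produced by Algorithm \ref{algo1} are \emph{not} mutually independent: at each step they share the common random drift $\alpha \hat{L}_{t_j}^N$ computed from all other paths, so the law of large numbers cannot be applied directly to $\hat{L}_{t_i}^N$.

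The way around this is a coupling argument. Assuming inductively that $\hat{L}_{t_j}^N \xrightarrow{\mathbb{P}} \tilde{L}_{t_j}$ for every $j<i$, I would introduce truly i.i.d.\ auxiliary particles
\[
\tilde{Y}_{t_j}^{(k)} := Y_0^{(k)} + W_{t_j}^{(k)} - \alpha \tilde{L}_{t_j}, \qquad j<i,\; k=1,\ldots,N,
\]
built from the \emph{same} $Y_0^{(k)}$ and $W^{(k)}$ used in the algorithm but driven by the deterministic loss curve $\tilde{L}$ of \eqref{discr_process1}--\eqref{discr_process2}. Setting $\bar{L}_{t_i}^N := \frac{1}{N}\sum_{k=1}^N \mathbbm{1}_{\{\min_{j<i} \tilde{Y}_{t_j}^{(k)} \le 0\}}$, I decompose
\[
\hat{L}_{t_i}^N - \tilde{L}_{t_i} = \bigl(\hat{L}_{t_i}^N - \bar{L}_{t_i}^N\bigr) + \bigl(\bar{L}_{t_i}^N - \tilde{L}_{t_i}\bigr).
\]
Since the $\tilde{Y}^{(k)}$ are i.i.d.\ with $\mathbb{E}[\mathbbm{1}_{\{\min_{j<i}\tilde{Y}_{t_j}\le 0\}}] = \tilde{L}_{t_i}$, the second term converges to $0$ almost surely by the strong law of large numbers.

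For the first, ``coupling bias'' term, observe that $\hat{Y}_{t_j}^{(k)} - \tilde{Y}_{t_j}^{(k)} = \alpha(\tilde{L}_{t_j} - \hat{L}_{t_j}^N)$ is the same for every $k$. Fix $\epsilon>0$ and introduce $A_\epsilon^N := \{\max_{j<i} |\hat{L}_{t_j}^N - \tilde{L}_{t_j}| < \epsilon\}$. On $A_\epsilon^N$ the two running minima over $j<i$ differ by at most $\alpha\epsilon$, so the two indicators can disagree only when $\min_{j<i} \tilde{Y}_{t_j}^{(k)} \in [-\alpha\epsilon, \alpha\epsilon]$, giving
\[
\mathbbm{1}_{A_\epsilon^N}\bigl|\hat{L}_{t_i}^N - \bar{L}_{t_i}^N\bigr| \le \frac{1}{N}\sum_{k=1}^N \mathbbm{1}_{\{\min_{j<i} \tilde{Y}_{t_j}^{(k)} \in [-\alpha\epsilon, \alpha\epsilon]\}}.
\]
By the induction hypothesis $\mathbb{P}((A_\epsilon^N)^c) \to 0$ as $N\to\infty$ for any fixed $\epsilon$, while the LLN shows that for large $N$ the right-hand side is close, with high probability, to $\mathbb{P}(\min_{j<i}\tilde{Y}_{t_j} \in [-\alpha\epsilon,\alpha\epsilon])$. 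Since each $\tilde{Y}_{t_j}$ is $Y_0+W_{t_j}$ shifted by a constant, its law admits a bounded density for $j\ge 1$ (convolution with a Gaussian) and is dominated near $0$ by $Bx^\beta$ for $j=0$ by Assumption \ref{assumption_Y0}; a union bound then bounds this probability by $C_i\epsilon$. Sending first $N\to\infty$ and then $\epsilon\to 0$ yields $\hat{L}_{t_i}^N \xrightarrow{\mathbb{P}} \tilde{L}_{t_i}$, closing the induction.
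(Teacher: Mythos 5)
Your proof is correct, and it reaches the conclusion by a genuinely different route than the paper. The paper's proof is a second-moment argument: it first proves an auxiliary Lemma (their Lemma~\ref{lemma_mc}) establishing, under the same induction hypothesis, that $\mathbb{E}[\hat{L}_{t_i}^N]\to\tilde{L}_{t_i}$ and $\mathbb{V}[\hat{L}_{t_i}^N]\to 0$ (the variance step requiring an explicit estimate of $\cov\bigl(\mathbbm{1}_{\{\min_{j<i}\hat{Y}_{t_j}^{(k)}>0\}},\mathbbm{1}_{\{\min_{j<i}\hat{Y}_{t_j}^{(l)}>0\}}\bigr)$ for $k\neq l$), and then concludes via Chebyshev's inequality. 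Your argument makes the same coupling $\hat{Y}_{t_j}^{(k)}-\tilde{Y}_{t_j}^{(k)}=\alpha(\tilde{L}_{t_j}-\hat{L}_{t_j}^N)$ explicit and, instead of going through moments, decomposes the error directly into an i.i.d.\ LLN term (driven by the deterministic curve $\tilde{L}$) and a ``coupling bias'' term bounded by the empirical frequency of $\min_{j<i}\tilde{Y}_{t_j}^{(k)}$ falling in the thin band $[-\alpha\epsilon,\alpha\epsilon]$, which is small by a density bound. This is arguably cleaner since it bypasses the covariance computation entirely; the paper's route has the mild advantage that it isolates the moment asymptotics as a standalone lemma. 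One small remark: for bounding $\mathbb{P}(\min_{j<i}\tilde{Y}_{t_j}\in[-\alpha\epsilon,\alpha\epsilon])$ you use a union bound over $j$ combined with per-marginal density bounds (Gaussian convolution for $j\ge 1$, Assumption~\ref{assumption_Y0} for $j=0$). That works because $h$ is fixed in this theorem, but the paper's Lemma~\ref{density_estimate} already gives a direct, $h$-uniform bound on the density of the running minimum $\min_{j<i}\tilde{Y}_{t_j}$ itself and could be invoked here instead, avoiding the union bound and the dependence of the constant on $i$ and $h$.
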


Next, we improve our scheme by using a Brownian bridge strategy to estimate the hitting probabilities. In order to do this, we consider the process
\begin{eqnarray}
	\label{discr_process1_bb} 
	\dtilde{Y}_{t} &=& Y_0 + W_{t} - \alpha \dtilde{L}_{t}, \qquad\!\!\! t \in [t_i, t_{i+1}),  \\
	\label{discr_process3_bb} 
	\dtilde{L}_{t} &=& \mathbb{P}(\dtilde{\tau} < t_i),  \qquad \qquad t \in [t_i, t_{i+1}), \\
	\label{discr_process2_bb} 
	\dtilde{\tau} &=& \inf_{0 \le s \le T} \{\dtilde{Y}_{s} \le 0\}.
\end{eqnarray}

Then, for each Brownian path $(W_{t}^{(k)})_{t\ge 0}$, we compute $\bar{Y}_{t}^{(k)} = Y_0^{(k)} + W_{t}^{(k)} - \alpha \bar{L}^N_{t}$ in $(t_i,t_{i+1})$, where $\bar{L}^N_{t}$ is an $N$-sample estimator of $\dtilde{L}_{t_i}$ given below. Hence, using Brownian bridges, we compute
\begin{eqnarray*}
	p^{(k)}_{t_i} &=& \mathbb{P} \left( \inf_{s < t_i} \bar{Y}_s^{(k)} > 0 | \bar{Y}_0^{(k)} , \ldots, \bar{Y}_{t_i}^{(k)} \right) \\
	 &=& \prod_{j = 1}^{i} \mathbb{P} \left(\inf_{s \in [t_{j-1}, t_j)} \bar{Y}^{(k)}_s > 0 \given \bar{Y}_{t_{j-1}}^{(k)} , \bar{Y}_{t_j}^{(k)} \right) \\
	  &=& \prod_{j = 1}^i \left( 1 - \exp\left(-\frac{2(\bar{Y}_{t_{j-1}}^{(k)} \vee 0)(\bar{Y}_{t_{j}-}^{(k)} \vee 0)}{h} \right) \right).
\end{eqnarray*}
Thus, a natural choice for $\bar{L}^N_{t_i}$ is
\begin{equation}
	\label{barL_est}
	\bar{L}^N_{t_i} = \frac{1}{N} \sum_{k = 1}^N \left(1 - p^{(k)}_{t_i}\right).
\end{equation}
As a result, the new algorithm with the Brownian bridge modification is the following.
\begin{algorithm}[H]
	\caption{Discrete time Monte Carlo scheme with Brownian bridge}
	\begin{algorithmic}[1]
		\Require{$N$ --- number of Monte Carlo paths}
		\Require{$n$ --- number of time steps: $0 < t_1 < t_2 < \ldots < t_n$}
		\State{Draw $N$ samples $Y_0$ (from the initial distribution) and  $W$ (a Brownian path)}
		\For{$i = 1:n$}
			\State{Estimate $\bar{L}_{t_i}$ using \eqref{barL_est}}
			\For{$k = 1:N$}
			\State{Update $\bar{Y}_{t_i}^{(k)} = Y_0^{(k)} + W_{t_i}^{(k)} - \alpha \bar{L}^N_{t_i}$}
			\EndFor
		\EndFor
	\end{algorithmic}
	\label{algo2}
\end{algorithm}
The convergence rate for (\ref{discr_process3_bb}) is given as follows and proven in Section \ref{sec:BB}.
\begin{theorem}
	\label{bb_theorem}
	Consider $\dtilde{L}_{t}$ from \eqref{discr_process1_bb}--\eqref{discr_process2_bb} and $L_t$ from \eqref{mckean-vlasov1}--\eqref{mckean-vlasov2},
	$\beta \in (0,1]$ from Assumption \ref{assumption_Y0}.
	Then, there exists $C > 0$ independent of $h$ such that 
	\begin{equation}
		\label{theor4_result}	
		\max_{i \le n} | \dtilde{L}_{t_i} - L_{t_i} | \le C h^{\frac{1 + \beta}{2}}.
	\end{equation}
\end{theorem}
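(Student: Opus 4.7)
The plan is to mimic the recursion underlying Theorem \ref{explicit_theorem}, but to exploit the key feature of the Brownian bridge correction: the hitting time $\dtilde{\tau}$ in \eqref{discr_process2_bb} is the infimum of the \emph{continuous-time} path $\dtilde{Y}$, so the order-$\sqrt{h}$ loss incurred by only sampling the path at grid points (as in Theorem \ref{explicit_theorem}) disappears. The only remaining discretisation error comes from replacing the true drift $\alpha L_s$ by its step-function approximation $\alpha\dtilde{L}_s$. Pathwise, $Y_s - \dtilde{Y}_s = -\alpha(L_s - \dtilde{L}_s)$, and I would split, for $s\in[t_j,t_{j+1})$,
\begin{equation*}
|L_s - \dtilde{L}_s| \le |L_s - L_{t_j}| + e_j, \qquad e_j := |L_{t_j} - \dtilde{L}_{t_j}|.
\end{equation*}
Using $L'_t \le \hat{B}\, t^{-(1-\beta)/2}$ from \eqref{L_t_prime_ineq}, the first term is bounded by $\hat{B}\,h^{(1+\beta)/2} j^{-(1-\beta)/2}$ for $j\ge 1$, while for $j=0$ the bound \eqref{L_t_ineq} gives $L_s \le \tilde{B}h^{(1+\beta)/2}$. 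Taking the supremum yields $\sup_{s\le t_i}|L_s-\dtilde{L}_s| \le C_0\,h^{(1+\beta)/2} + \max_{j<i} e_j$ for some $C_0$ depending only on $\hat{B},\tilde{B},\beta$.

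The central step is to translate this deterministic pathwise bound into a bound on the hitting probability. Setting $\varepsilon_i := \alpha\bigl(C_0 h^{(1+\beta)/2} + \max_{j<i} e_j\bigr)$, so that $\sup_{s<t_i}|Y_s-\dtilde{Y}_s|\le \varepsilon_i$, the inclusion $\{\tau\le t_i\}\,\triangle\,\{\dtilde{\tau}<t_i\} \subset \{\min_{s\le t_i} Y_s \in [-2\varepsilon_i,2\varepsilon_i]\}$ gives
\begin{equation*}
e_i \;\le\; \mathbb{P}\Bigl(\min_{s\le t_i} Y_s \in [-2\varepsilon_i,2\varepsilon_i]\Bigr).
\end{equation*}
The hard part will be to show that this last probability is at most $K\,\varepsilon_i$ with a constant $K$ of precisely the form appearing in Assumption \ref{assumption_T}. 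I would condition on $Y_0=y$, use the explicit representation of $Y$ as a Brownian motion with the deterministic drift $-\alpha L_\cdot$ started from $y$, and apply a reflection-type argument to express $\mathbb{P}(\min_{s\le t_i} Y_s \le x\mid Y_0=y)$ through Gaussian densities; integrating against $f_{Y_0}$ and using \eqref{Y_0_assumption} together with $L_t\le \tilde{B}t^{(1+\beta)/2}$ should produce exactly the factor $B\bigl[\sqrt{2T/\pi}+\alpha\tilde{B}T^{(1+\beta)/2}\bigr]^{\beta}$ from the left-hand side of \eqref{theorem_condition}.

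With such a linear-in-$\varepsilon$ density bound in hand, the recursion $e_i \le K\alpha\bigl(C_0 h^{(1+\beta)/2} + \max_{j<i} e_j\bigr)$ closes: Assumption \ref{assumption_T} forces $K\alpha < 1$ uniformly for $t_i\le T<T^{*}$, so a straightforward induction (or discrete Gronwall argument) on $i$ yields $\max_{i\le n} e_i \le C h^{(1+\beta)/2}$ with $C$ independent of $h$, which is precisely \eqref{theor4_result}. Note that the Hölder exponent $\beta$ enters the final rate only through its role in bounding the local drift increment via \eqref{L_t_prime_ineq}--\eqref{L_t_ineq}; if $\beta = 1$, one recovers a full order-$h$ convergence, consistent with the heuristic that the only obstruction beyond the Brownian bridge correction is the regularity of $L$ near $t=0$.
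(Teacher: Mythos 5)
Your proposal follows essentially the same route as the paper's proof: induction over timesteps, a bound on $\sup_{s<t_i}|L_s-\dtilde{L}_s|$ coming from the regularity estimate \eqref{L_t_prime_ineq}, and a density bound for $\inf_{s\le t_i}Y_s$ (which is precisely the paper's Lemma~\ref{density_estimate}) to convert pathwise $\varepsilon_i$-closeness of $Y$ and $\dtilde{Y}$ into a linear-in-$\varepsilon_i$ bound on the hitting-probability gap, closed via Assumption~\ref{assumption_T}. One refinement the paper uses implicitly that you should add: by an easy induction $\dtilde L_t\le L_t$, hence $\dtilde Y_s\ge Y_s$, so the relevant event is the one-sided window $\{\inf_{s\le t_i}Y_s\in(-\varepsilon_i,0]\}$ rather than your symmetric $[-2\varepsilon_i,2\varepsilon_i]$; this removes a spurious factor of about four in the closing constant and makes the recursion match the threshold in \eqref{theorem_condition} exactly, whereas your version as written would require a strictly stronger smallness condition.
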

We will later give a result with variable time steps which achieves rate 1 for all $\beta$.

\section{Convergence results}

\subsection{Convergence of the timestepping scheme}
\label{sec:conveul}

In this section we prove Theorem \ref{explicit_theorem}. Then, in Section \ref{subsec:ext}, under a modification of Assumption \ref{assumption_T},
we formulate and prove an improvement of this theorem which extends the applicable time interval.

The proof is based on induction on the error bound over the timesteps,
which requires an error estimate of the hitting probability after discretisation (Lemmas \ref{lemma_asmussen} and \ref{lemma_1_2_delta}), a sort of consistency,
plus a control of  the resulting misspecification of the barrier through a bound on the density of the running minimum (Lemma \ref{density_estimate}), a kind of stability.

As we have crude estimates on the densities, using only Assumption \ref{assumption_Y0} but no sharper
bound on, and regularity of, $f_{Y_0}$, or any regularity of the distribution of $\inf_{s\le t} (W_s-\alpha L_s)$ or its approximations, the time $T^*$ until which the
numerical scheme is shown to converge will by  no means be sharp. Indeed, in our numerical tests (see Section \ref{sec:numer}) we did not encounter any difficulties for any $t$, even $t>\texp$.

We first formulate these auxiliary results which we will use to prove Theorems \ref{explicit_theorem}, \ref{bb_theorem}, and \ref{explicit_theorem_improved}. The proofs are given in Appendix \ref{appendix_proofs}.

First, we modify Proposition 1 from \cite{asmussen1995discretization}, where an analogous result is shown for standard Brownian motion, i.e., without the presence of $L$ and $\tilde{L}$ (i.e., $\alpha=0$).
\begin{lemma}
	\label{lemma_asmussen}
	Define $Y^{*}_t = Y_0 + W_{h \lfloor \frac{t}{h} \rfloor} - \alpha L_{h \lfloor \frac{t}{h} \rfloor}$ and  $\tilde{Y}^{*}_t = Y_0 + W_{h \lfloor \frac{t}{h} \rfloor} - \alpha \tilde{L}_{h \lfloor \frac{t}{h} \rfloor}$. Then, as $h \to 0$,
	\begin{equation}
		\label{asmussen_result}
		 \frac{1}{\sqrt{h}} \sup_{s \le t} \left(Y_{s}  - Y^{*}_s \right)\rightarrow_{d}  \sqrt{2 \log{\frac{t}{h}}},
	 \end{equation}
	 and
	\begin{equation}
		\label{asmussen_result2}
		 \frac{1}{\sqrt{h}} \sup_{s \le t} \left(\tilde{Y}_{s}  - \tilde{Y}^{*}_s \right)\rightarrow_{d}  \sqrt{2 \log{\frac{t}{h}}}.
	 \end{equation}	 
\end{lemma}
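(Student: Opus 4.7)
The lemma asserts that the Asmussen--Glynn--Pitman asymptotics for the running overshoot of a standard Brownian motion over its piecewise-constant sampled version survive the additional drift term $-\alpha L$ (resp.\ $-\alpha\tilde L$). My strategy is therefore to isolate the Brownian contribution, apply the original result of \cite{asmussen1995discretization} to it, and show that the drift correction is of strictly lower order and so disappears in the limit by a Slutsky-type argument.

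\textbf{Step 1 (decomposition).} Write
\begin{equation*}
  Y_s - Y^{*}_s \;=\; \bigl(W_s - W_{h\lfloor s/h\rfloor}\bigr) \;-\; \alpha\bigl(L_s - L_{h\lfloor s/h\rfloor}\bigr),
\end{equation*}
and analogously with $L$ replaced by $\tilde L$ in the second case.

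\textbf{Step 2 (the drift term is negligible).} Using monotonicity of $L$ and the regularity bound \eqref{L_t_prime_ineq}, for $s\in[t_i,t_{i+1}]$ with $i\ge 1$,
\begin{equation*}
  0\le L_s - L_{t_i} \;\le\; \hat B\int_{t_i}^{s} u^{-(1-\beta)/2}\,du \;\le\; \hat B\, h\, t_i^{-(1-\beta)/2},
\end{equation*}
while on the first interval $L_s-L_0\le \tilde B h^{(1+\beta)/2}$ by \eqref{L_t_ineq}. Taking a supremum over $s\le t$ gives $\sup_{s\le t}(L_s-L_{h\lfloor s/h\rfloor})\le C_t\, h^{(1+\beta)/2}$ for some deterministic $C_t$. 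Since $\beta>0$, this is $o(\sqrt{h}\,)$, and in particular $o(\sqrt{h\log(t/h)})$. For the discrete loss process, the extension $\tilde L_s=\tilde L_{t_{i-1}}$ for $s\in(t_{i-1},t_i)$ makes $\tilde L_s=\tilde L_{h\lfloor s/h\rfloor}$ on the open intervals exactly, so the drift correction is identically zero there; the sup over $s\le t$ is therefore unaffected.

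\textbf{Step 3 (Asmussen's result and conclusion).} Proposition 1 of \cite{asmussen1995discretization} applied to the pure Brownian part yields
\begin{equation*}
  \frac{1}{\sqrt{h}}\sup_{s\le t}\bigl(W_s - W_{h\lfloor s/h\rfloor}\bigr) \;\to_d\; \sqrt{2\log(t/h)}
\end{equation*}
in the sense of the cited proposition. For \eqref{asmussen_result2} this is the statement directly, via Step 2. For \eqref{asmussen_result}, combine with the Step 2 bound $\alpha\sup_{s\le t}(L_s-L_{h\lfloor s/h\rfloor})/\sqrt h\to 0$ and invoke Slutsky's theorem.

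\textbf{Main obstacle.} There is no deep difficulty: the only delicate point is that $L'$ is permitted to blow up like $u^{-(1-\beta)/2}$ near $u=0$, so one cannot naively write $L_s-L_{h\lfloor s/h\rfloor}\le h\,\|L'\|_\infty$. This is handled by splitting off the first interval $[0,h]$ and using the integrated bound \eqref{L_t_ineq} there; everywhere else the pointwise bound \eqref{L_t_prime_ineq} is integrable over a step of size $h$ and gives a drift increment of order $h^{(1+\beta)/2}$, which is still of lower order than $\sqrt h$ because $\beta>0$.
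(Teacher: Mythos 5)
Your proof is correct and takes essentially the same approach as the paper: decompose $Y_s - Y^*_s$ into the Brownian increment $W_s - W_{h\lfloor s/h\rfloor}$ plus a drift correction, apply Proposition 1 of \cite{asmussen1995discretization} to the Brownian part, and show the drift correction is negligible relative to $\sqrt{h}$. The paper sandwiches the increment subinterval by subinterval using the mean-value theorem and the pointwise bound $0\le L'_s\le\hat B s^{-(1-\beta)/2}$, rather than a single uniform supremum bound followed by Slutsky, but the content is the same. If anything, your explicit treatment of the first subinterval $[0,h]$ via the integrated bound \eqref{L_t_ineq} is slightly cleaner than the paper's per-interval estimate, which formally involves the quantity $(h(k-1))^{-(1-\beta)/2}$ that is undefined for $k=1$; and your observation that the drift correction for $\tilde Y$ is identically zero (by the piecewise-constant extension of $\tilde L$) matches the paper's computation.
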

The next lemma deduced the convergence rate of the hitting probabilities on the mesh.
\begin{lemma}
	\label{lemma_1_2_delta}
	Consider the processes $Y$ and $\tilde{Y}$. Then, for any $\delta > 0$ there exist $\gamma, \tilde{\gamma} > 0$, independent of $h$ and $i$, such that 
	\begin{equation}
	\label{eqn_1_2_delta}
		  0 \le \mathbb{P}\left(\min_{j < i} Y_{t_j} > 0\right) -  \mathbb{P}\left(\inf_{s < t_i} Y_s > 0\right)  \le  \gamma h^{\frac{1}{2} - \delta}
	\end{equation}
	and
%
	\label{lemma_1_2_delta_tilde}
	\begin{equation}
	\label{eqn_1_2_delta_tilde}
		  0 \le \mathbb{P}\left(\min_{j < i} \tilde{Y}_{t_j} > 0\right) -  \mathbb{P}\left(\inf_{s < t_i} \tilde{Y}_s > 0\right)  \le  \tilde{\gamma} h^{\frac{1}{2} - \delta}.
	\end{equation}
\end{lemma}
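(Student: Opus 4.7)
The lower bound in \eqref{eqn_1_2_delta} (and \eqref{eqn_1_2_delta_tilde}) is immediate since $\{\inf_{s<t_i} Y_s>0\}\subseteq\{\min_{j<i} Y_{t_j}>0\}$; the content is the upper bound, which controls the probability that the path $Y$ dips below zero strictly between grid points. I would quantify this by a barrier-misspecification decomposition at a slack $\eta=\eta(h)$ to be chosen. Writing $Y^{*}_s=Y_0+W_{t_{j-1}}-\alpha L_{t_{j-1}}$ for $s\in[t_{j-1},t_j)$ as in Lemma \ref{lemma_asmussen}, so that $\{\min_{j<i} Y_{t_j}>0\}=\{\inf_{s<t_i} Y^{*}_s>0\}$, I would use the set inclusion
\begin{equation*}
\{\inf_{s<t_i} Y^{*}_s>0,\ \inf_{s<t_i} Y_s\le 0\}
\;\subseteq\;
\bigl\{\inf_{s<t_i} Y_s\in[-\eta,0]\bigr\}\cup\bigl\{\sup_{s<t_i}(Y^{*}_s-Y_s)>\eta\bigr\},
\end{equation*}
which is valid because if $\inf Y_s<-\eta$, then at a time $s$ with $Y_s<-\eta<0<Y^{*}_s$ one has $Y^{*}_s-Y_s>\eta$.

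For the first piece I would appeal to Lemma \ref{density_estimate} for a uniform bound on the density of $\inf_{s<t_i} Y_s$ near zero, giving $\mathbb{P}(\inf_{s<t_i} Y_s\in[-\eta,0])\le C\eta$. For the second piece, rather than use Lemma \ref{lemma_asmussen} as a distributional statement, I would use the reflection-principle tail bound underlying it: splitting $Y^{*}_s-Y_s=(W_{t_{j-1}}-W_s)-\alpha(L_{t_{j-1}}-L_s)$, the Brownian part is controlled by a union bound over the $n=T/h$ intervals,
\begin{equation*}
\mathbb{P}\bigl(\max_{j<i}\sup_{s\in[t_{j-1},t_j)}|W_s-W_{t_{j-1}}|>\eta/2\bigr)\le \tfrac{4T}{h}\exp\bigl(-\tfrac{\eta^{2}}{8h}\bigr),
\end{equation*}
while the monotone drift part is dominated by $\alpha(L_{t_j}-L_{t_{j-1}})$, which via \eqref{L_t_prime_ineq}--\eqref{L_t_ineq} is uniformly $O(h^{(1+\beta)/2})$ and hence $o(\eta)$. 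Choosing $\eta=h^{1/2-\delta}$ then balances the two sources: the density term contributes $Ch^{1/2-\delta}$, while the tail term decays faster than any polynomial in $h$ since $\eta^{2}/h=h^{-2\delta}$.

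The bound \eqref{eqn_1_2_delta_tilde} follows by running the same decomposition with $\tilde Y^{*}$ in place of $Y^{*}$, using \eqref{asmussen_result2} (or rather the reflection-principle tail bound, which ignores the drift) and applying Lemma \ref{density_estimate} to $\tilde Y$. The decomposition itself is routine; the substantive obstacle, which I expect to be the hard part, is the uniform-in-$h$ control of the density at zero of the running minimum of $\tilde Y$: the drift $\alpha\tilde L_t$ is a piecewise-constant step function, so the boundedness of this density must not degenerate as the mesh refines. This is exactly what is required of Lemma \ref{density_estimate}, and essentially all of the real difficulty of the argument is concentrated there.
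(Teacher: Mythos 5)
Your proposal is correct, and it follows the same high-level decomposition as the paper --- split into a near-barrier event and a large-excursion event, bound the former by the density estimate of Lemma~\ref{density_estimate}, bound the latter by a tail estimate, then optimise the slack --- but you implement the excursion tail differently. The paper quotes Lemma~\ref{lemma_asmussen} as a convergence-in-distribution statement, applies a $p$-th moment Markov inequality to obtain a bound of the form $A\,h^{p/2}/\varepsilon^{p} + B\varepsilon$, and sends $p\to\infty$; this implicitly requires passing from convergence in distribution to convergence of the $p$-th moment, which the paper handles somewhat informally. You instead bypass Lemma~\ref{lemma_asmussen} entirely and use the reflection-principle Gaussian tail with a union bound over the $T/h$ subintervals, getting $\mathbb{P}(\sup_{s<t_i}(Y^{*}_s - Y_s)>\eta)\lesssim (T/h)\,e^{-\eta^{2}/(8h)}$ plus a deterministic drift correction $O(h^{(1+\beta)/2}) = o(\eta)$. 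With $\eta=h^{1/2-\delta}$ the exponential term is superpolynomially small, so the density term dominates and gives the stated rate. This is more elementary and more quantitative: the same balancing with $\eta\asymp\sqrt{h\log(1/h)}$ would actually yield a slightly sharper $O(\sqrt{h\log(1/h)})$ bound, though you only claim what the lemma states. Your closing remark --- that the genuine difficulty lies in the uniform-in-$h$ density bound of Lemma~\ref{density_estimate}, especially for the piecewise-constant drift process $\tilde Y$ --- matches where the paper's real work sits.
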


Finally, we bound the probability that the running minimum is close to the boundary.
\begin{lemma}
	Consider $Z_i = \inf_{s \le t_i} Y_{s}$, $\bar{Z}_i = \min_{j < i} Y_{t_j}$,  and $\tilde{Z}_i = \min_{j < i} \tilde{Y}_{t_j}$ for some $i \le n$. Then, $Z_i, \bar{Z}_i$, and $\tilde{Z}_i$ each have a density, denoted $\varphi_i$, $\bar{\varphi}_i$, and $\tilde{\varphi}_i$, respectively, with
	\begin{equation}
		\max(\varphi_i(z), \bar{\varphi}_i(z), \tilde{\varphi}_i(z)) \le B \left[(z \vee 0) + \sqrt{\frac{2 t_i}{\pi}} + \alpha \tilde{B} t_i^{\frac{1 + \beta}{2}}  \right]^{\beta}, 
		\label{varphi_estimate} \\
	\end{equation}
	where $\beta$, $B$ are from \eqref{Y_0_assumption} and $\tilde{B}$ is from \eqref{L_t_ineq}.	
	\label{density_estimate}	
\end{lemma}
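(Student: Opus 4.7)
The plan is to exploit the fact that in all three cases the random variable can be written as $Y_0$ plus an independent random variable (since $L_t$ is deterministic for the true process, and the Brownian path is independent of $Y_0$). Concretely, set $X_s := W_s - \alpha L_s$ and $\tilde{X}_{t_j} := W_{t_j} - \alpha \tilde{L}_{t_j}$. Then $Z_i = Y_0 + M_i$ with $M_i := \inf_{s\le t_i} X_s$, $\bar{Z}_i = Y_0 + \bar{M}_i$ with $\bar{M}_i := \min_{j<i} X_{t_j}$, and $\tilde{Z}_i = Y_0 + \tilde{M}_i$ with $\tilde{M}_i := \min_{j<i} \tilde{X}_{t_j}$, and in each case the second summand is independent of $Y_0$.

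Using independence and conditioning on the noise, the density of $Z_i$ satisfies
\begin{equation*}
\varphi_i(z) = \mathbb{E}\bigl[f_{Y_0}(z - M_i)\bigr] \le B \, \mathbb{E}\bigl[((z-M_i)\vee 0)^{\beta}\bigr],
\end{equation*}
where I have used Assumption \ref{assumption_Y0} together with the fact that $f_{Y_0}$ vanishes on the negative half-line. Since $x\mapsto x^{\beta}$ is concave for $\beta\in(0,1]$, Jensen's inequality yields
\begin{equation*}
\varphi_i(z) \le B \bigl(\mathbb{E}[(z-M_i)\vee 0]\bigr)^{\beta} \le B \bigl((z\vee 0) + \mathbb{E}[-M_i]\bigr)^{\beta},
\end{equation*}
where the last step uses that $M_i\le 0$ so $(z-M_i)\vee 0 \le (z\vee 0) + (-M_i)$. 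It then remains to bound
\begin{equation*}
\mathbb{E}[-M_i] = \mathbb{E}\Bigl[\sup_{s\le t_i}(\alpha L_s - W_s)\Bigr] \le \alpha L_{t_i} + \mathbb{E}\Bigl[\sup_{s\le t_i}(-W_s)\Bigr] = \alpha L_{t_i} + \sqrt{2 t_i/\pi},
\end{equation*}
using monotonicity of $L$ and the reflection principle; finally the bound \eqref{L_t_ineq} on $L_{t_i}$ gives the required expression.

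For $\bar{Z}_i$ the same scheme of inequalities applies, since $\bar{M}_i \ge M_i$ (the discrete-time minimum dominates the continuous-time infimum), and hence $\mathbb{E}[-\bar{M}_i]\le \mathbb{E}[-M_i]$. For $\tilde{Z}_i$ the argument is structurally identical, provided $\tilde{L}$ obeys the same bound $\tilde{L}_{t_i}\le \tilde{B} t_i^{(1+\beta)/2}$ as $L$; this is the main obstacle in the discrete case, because \eqref{L_t_ineq} was proven for $L$ only. I expect this to be closed either inductively (using Theorem \ref{explicit_theorem}, $\tilde{L}$ differs from $L$ by an $O(h^{1/2-\delta})$ term which can be absorbed into the constant $\tilde{B}$ up to renaming) or by a direct bound on $\tilde{L}_{t_i}$ derived from $\mathbb{P}(\tilde{Y}_{t_i}\le 0)$ combined with the estimate \eqref{Y_0_assumption} on $f_{Y_0}$, propagated through the timesteps. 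Once this uniform bound on $\tilde{L}$ is in hand, the three cases are treated in one stroke by the computation above.
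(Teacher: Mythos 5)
Your argument for $\varphi_i$ and $\bar\varphi_i$ is essentially the paper's, packaged slightly more cleanly: you apply Jensen directly to the non-negative variable $(z-M_i)\vee 0$ with the concave map $x\mapsto x^\beta$ on $[0,\infty)$, whereas the paper first conditions on $\{V_i\le z\wedge 0\}$ and picks up a harmless factor $F_{V_i}(z\wedge 0)^{1-\beta}\le 1$ before discarding it. Both routes lead to bounding $\mathbb{E}[-M_i]$ by $\alpha L_{t_i}+\mathbb{E}[\sup_{s\le t_i}(-W_s)]=\alpha L_{t_i}+\sqrt{2t_i/\pi}$, and your use of $\bar M_i\ge M_i$ for the discrete minimum is exactly how the paper handles $\bar\varphi_i$. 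So up to here, same approach.

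The genuine gap is the one you flag yourself: the case of $\tilde\varphi_i$, which needs $\tilde L_{t_j}\le \tilde B\,t_j^{(1+\beta)/2}$. Unfortunately neither of your proposed fixes works. Invoking Theorem~\ref{explicit_theorem} is circular, since the proof of that theorem relies on this very lemma to control $\bar\varphi_i$; and ``a direct bound on $\tilde L_{t_i}$ propagated through the timesteps'' is exactly the nontrivial thing you would need to supply, not a resolution. The paper closes this with a one-line monotone coupling observation that costs nothing: $\tilde L_{t_i}\le L_{t_i}$ for every $i$. Indeed, by induction on $i$: if $\tilde L_{t_j}\le L_{t_j}$ for all $j<i$, then pathwise $\tilde Y_{t_j}=Y_0+W_{t_j}-\alpha\tilde L_{t_j}\ge Y_0+W_{t_j}-\alpha L_{t_j}=Y_{t_j}$, hence $\min_{j<i}\tilde Y_{t_j}\ge \min_{j<i}Y_{t_j}\ge \inf_{s\le t_i}Y_s$, and so
\begin{equation*}
\tilde L_{t_i}=\mathbb{P}\Bigl(\min_{j<i}\tilde Y_{t_j}\le 0\Bigr)\le \mathbb{P}\Bigl(\inf_{s\le t_i}Y_s\le 0\Bigr)=L_{t_i}.
\end{equation*}
This gives $\tilde L_{t_j}\le L_{t_j}\le\tilde B\,t_j^{(1+\beta)/2}$, after which your computation for $\tilde\varphi_i$ goes through verbatim (note also $\tilde M_i\le 0$ since the minimum includes $j=0$). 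You should state and use this monotonicity explicitly; with it your proof is complete and matches the paper's.
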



We are now in  a position to prove Theorem \ref{explicit_theorem}. 

\begin{proof}[Proof of Theorem \ref{explicit_theorem}]
We split the error into two contributions, 
\begin{multline*}
	| \tilde{L}_{t_i} - L_{t_i} | = \left| \mathbb{P}\left(\min_{j < i} \tilde{Y}_{t_j} > 0\right) - \mathbb{P}\left(\inf_{s < t_i} Y_s > 0\right) \right| \\
	\le \left| \mathbb{P}\left(\min_{j < i} \tilde{Y}_{t_j}> 0\right) - \mathbb{P}\left(\min_{j < i}  Y_{t_j} > 0\right)  \right| + \left|  \mathbb{P}\left(\min_{j < i} Y_{t_j} > 0\right) -  \mathbb{P}\left(\inf_{s < t_i} Y_s > 0\right) \right|.
\end{multline*}
We can use Lemma \ref{lemma_1_2_delta}, (\ref{eqn_1_2_delta}), for the second term.
Now we shall proceed by induction to estimate $| \tilde{L}_{t_i} - L_{t_i} |$.
For $t_0 = 0$, we have $L_0 = \tilde{L}_0$. Assume we have shown $\tilde{L}_{t_j} = L_{t_j} - \tilde{C}_j h^{\frac{1}{2} - \delta}$ for $j < i$,
where $\tilde{C}_j \ge 0$ as $\tilde{L}_{t_j} \le L_{t_j}$. Then,
\begin{eqnarray*}
 \mathbb{P}\left(\min_{j < i} \tilde{Y}_{t_j}> 0\right) -  \mathbb{P}\left(\min_{j < i}Y_{t_j} > 0\right) && \\
 &\hspace{-4 cm}=& \hspace{-2 cm} \mathbb{P}\left(\min_{j < i} \left(Y_{t_j} + \alpha \tilde{C}_j h^{\frac{1}{2} - \delta}   \right) > 0\right) - \mathbb{P}\left(\min_{j < i} Y_{t_j} > 0\right)    \\
&\hspace{-4 cm}\le& \hspace{-2 cm} \mathbb{P}\left(\min_{j < i} Y_{t_j}   >  -\alpha \max_{j < i} {\tilde{C}_j} h^{\frac{1}{2} - \delta} \right) - \mathbb{P}\left(\min_{j < i} Y_{t_j} > 0\right) \\
&\hspace{-4 cm}=& \hspace{-2 cm} \bar{F}_i(0) - \bar{F}_i\left(  -\alpha \max_{j < i} {\tilde{C}_j} h^{\frac{1}{2} - \delta}  \right) \\
&\hspace{-4 cm}\le& \hspace{-2 cm}   \alpha \max_{j < i} { \tilde{C}_j} h^{\frac{1}{2} - \delta}  
\sup_{\theta \in [0,1]} \bar{\varphi}_i \left( -\theta \alpha \max_{j < i} { \tilde{C}_j} h^{\frac{1}{2} - \delta} \right),
\end{eqnarray*}
where $\bar{F}_i(x)$ and $\bar{\varphi}_i(x)$ are the CDF and pdf of $\min_{j < i} Y_{t_j}$. 

Then, using Lemma \ref{density_estimate}, we have
\begin{equation*}
	\bar{\varphi}_i \left( -\theta \frac{\alpha}{2} \max_{j < i} { \tilde{C}_j} h^{\frac{1}{2} - \delta} \right) \le B \left[ \sqrt{\frac{2 t_i}{\pi}} + \alpha \tilde{B} t_i^{\frac{1 + \beta}{2}}  \right]^{\beta},
\end{equation*}
as $ -\theta \frac{\alpha}{2} \max_{j < i} { \tilde{C}_j} h^{\frac{1}{2} - \delta} < 0$.
As a result, we have the following inequality for $\tilde{C}_i$,
\begin{equation*}
	\tilde{C}_i \le \alpha \max_{j < i} \tilde{C}_j B \left[ \sqrt{\frac{2 t_i}{\pi}} + \alpha \tilde{B} t_i^{\frac{1 + \beta}{2}}  \right]^{\beta} + \gamma,
\end{equation*}
%
hence $\tilde{C}_i$ is bounded independent of $i$ and $h$ by Assumption \ref{assumption_T}.
By induction we get \eqref{theor1_result}.

\end{proof}

\subsection{Extension of the result in time}
\label{subsec:ext}

The following result extends the applicability of Theorem~\ref{explicit_theorem} up to the explosion time $\texp$ under certain conditions on the parameters, 
as specified precisely in \eqref{theorem2_condition} below. 

We shall adapt Theorem 1 in \cite{Novikov}, which states: For a Lipschitz function $f$ with Lipschitz constant $K$, and $g$ such that  $\sup_{s \le t} | f(s) - g(s) | \le \varepsilon$,
\begin{equation*}
	| \mathbb{P}(\exists s \in [0, t]:  W_s < f(s)) - \mathbb{P}(\exists s \in [0, t]:  W_s < g(s)) | \le \left(2.5 K + \frac{2}{\sqrt{t}} \right) \varepsilon.
\end{equation*}
By Remark 2 in \cite{Novikov}, this result can be improved for a non-decreasing function $g$. Indeed, retracing the steps in their proof and using monotonicity, one finds easily the slightly better bound
\begin{equation*}
	| \mathbb{P}(\exists s \in [0, t]:  W_s < f(s)) - \mathbb{P}(\exists s \in [0, t]:  W_s < g(s)) | \le 2 \left( K + \frac{1}{\sqrt{t}} \right) \varepsilon.
\end{equation*}

In our case, we cannot directly apply the result with $f(s)=-Y_0 + \alpha L_s$ and $g(s)=-Y_0 + \alpha \tilde{L}_s$
as $f$ is not guaranteed to be Lipschitz at $s=0$.
But, along the lines of the proof of Theorem 1 in \cite{Novikov}, the above result can be modified as follows:

\begin{lemma}
For a non-decreasing function $f$ which is Lipschitz with constant $K$ on $[T^*, T]$, and a function $g$ such that $f(t) = g(t)$ for $t \le T^{*}$ and $\sup_{s \le T} | f(s) - g(s) | \le \varepsilon$,
\begin{equation}
	\label{novikov_theorem}
	| \mathbb{P}(\exists s \in [0, T]:  W_s < f(s)) - \mathbb{P}(\exists s \in [0, T]:  W_s < g(s)) | \le 2 \left( K + \frac{1}{\sqrt{T}} \right) \varepsilon.
\end{equation}
\end{lemma}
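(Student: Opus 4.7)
The approach is to modify Novikov's proof of his Theorem~1 (in the monotonicity-improved form stated just above the lemma), exploiting the fact that $f = g$ on $[0, T^*]$. Setting $\tau_f := \inf\{s \in [0, T]: W_s < f(s)\}$ and $\tau_g$ analogously (with $\inf \emptyset = \infty$), the agreement $f = g$ on $[0, T^*]$ immediately gives $\{\tau_f \le T^*\} = \{\tau_g \le T^*\}$, so
\begin{equation*}
\mathbb{P}(\tau_f \le T) - \mathbb{P}(\tau_g \le T) = \mathbb{P}(T^* < \tau_f \le T) - \mathbb{P}(T^* < \tau_g \le T),
\end{equation*}
and the entire error is due to crossings in $(T^*, T]$.

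Next, I would condition on $\mathcal{F}_{T^*}$ on the event $E^c := \{\tau_f > T^*\} = \{\tau_g > T^*\}$. By the strong Markov property, the shifted process $u \mapsto W_{T^* + u} - W_{T^*}$ is a Brownian motion independent of $\mathcal{F}_{T^*}$, so the hitting of $f$ or $g$ on $(T^*, T]$ reduces to a Brownian motion hitting the shifted barriers $u \mapsto f(T^* + u) - W_{T^*}$ (respectively $g$) on $[0, T - T^*]$. These shifted barriers are non-decreasing, $\varepsilon$-close in the sup norm, and the $f$-version is genuinely Lipschitz with constant $K$. The monotonicity-improved Novikov estimate can therefore be applied pointwise in $W_{T^*}$ and averaged against its law, restricted to $E^c$.

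A direct application at this stage yields a bound of the form $2(K + 1/\sqrt{T - T^*})\varepsilon$. To obtain the sharper $2(K + 1/\sqrt{T})\varepsilon$, I would retrace Novikov's proof: the $1/\sqrt{\cdot}$ factor there arises from a supremum of a Gaussian density over the interval on which the Brownian motion has evolved. Since in our setting the Brownian motion runs on the full interval $[0, T]$ (with the $[0, T^*]$ contribution canceling exactly because $f = g$ there), the relevant horizon in the density estimate is $T$ rather than $T - T^*$. The main obstacle is precisely this last step: a careful local modification of Novikov's density bound to verify that the horizon enters as $T$, not as the post-$T^*$ segment length $T - T^*$ that naive conditioning would produce.
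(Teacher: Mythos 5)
Your proposal correctly identifies the central observation — agreement of $f$ and $g$ on $[0,T^*]$ localizes the discrepancy in hitting behaviour to $(T^*,T]$, where $f$ is Lipschitz — and also correctly flags the point where the naive version of your argument breaks. But the flag is itself the gap: conditioning at $T^*$ and applying the monotonicity-improved Novikov bound as a black box yields $2(K + 1/\sqrt{T-T^*})\varepsilon$, and this is not merely a suboptimal constant. In the way the lemma is deployed in the proof of Theorem~\ref{explicit_theorem_improved}, the horizon $T$ is taken to be each grid time $t_i$ with $t_i$ ranging down to just above $T^*$, so $1/\sqrt{t_i - T^*}$ blows up precisely on the first steps past $T^*$ and the recursion for $C_i$ fails to close under \eqref{theorem2_condition}. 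Thus the conditioning-and-black-box route does not prove a usable version of the lemma at all; it must be abandoned, not merely tightened.

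Your final paragraph gestures at the correct repair — reopen Novikov's proof so that the Gaussian-density factor is evaluated over $[0,T]$ rather than $[T^*,T]$, using that the cancellation of the two barriers on $[0,T^*]$ is exact — but you explicitly leave this as ``the main obstacle,'' i.e.\ unproven. That step is in fact the entire content of the lemma. The paper itself gives no displayed proof either; it states that the bound follows ``along the lines of the proof of Theorem 1 in \cite{Novikov},'' which amounts to tracing Novikov's argument directly: the term controlled by the Lipschitz constant only accrues on the set where the two barriers differ, hence only involves $K$ on $[T^*,T]$, while the density/terminal-time factor is governed by the full interval $[0,T]$ exactly as in the unmodified theorem. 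So the intended argument bypasses your conditioning detour entirely. To make your write-up a proof you would need to carry out that retracing; as it stands, the key estimate $1/\sqrt{T}$ (rather than $1/\sqrt{T-T^*}$) is asserted but not established.
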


\begin{theorem}
	\label{explicit_theorem_improved}
Assume $T^*$ from (\ref{theorem_condition}) satisfies
	\begin{equation}
	  		 \label{theorem2_condition}	
			 2 \alpha \left(\hat{B} (T^{*})^{-\frac{1-\beta}{2}} + \frac{1}{\sqrt{T^{*}}}\right) < 1.
	\end{equation}
Then Theorem \ref{explicit_theorem} holds for any $T<t_*$ (and not only up to $T^*$ as per Assumption \ref{assumption_T}).

\end{theorem}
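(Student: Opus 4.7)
The proof plan is to start with Theorem \ref{explicit_theorem} on $[0,T^*]$ and then extend the error bound past $T^*$ by propagating it with the modified Novikov estimate \eqref{novikov_theorem}. The crucial observation is that for $s\ge T^*$, the bound \eqref{L_t_prime_ineq} makes $L$ Lipschitz with constant $\hat B(T^*)^{-(1-\beta)/2}$ (since $s\mapsto s^{-(1-\beta)/2}$ is non-increasing for $\beta\in(0,1]$), so past $T^*$ we can swap the density bound from Lemma \ref{density_estimate} (which is too large near $t_*$) for a tighter Novikov-type bound that uses this Lipschitz control. The contraction factor produced in the process is exactly $2\alpha(\hat B(T^*)^{-(1-\beta)/2}+1/\sqrt{T^*})$, which is the quantity required to be $<1$ by \eqref{theorem2_condition}.

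Fix $T\in(T^*,t_*)$, set $\varepsilon(T):=\sup_{t_i\le T}|\tilde L_{t_i}-L_{t_i}|$, and consider $t_i\in(T^*,T]$. Using Lemma \ref{lemma_1_2_delta_tilde} first, I would replace the discrete hitting probability $\tilde L_{t_i}$ by its continuous-time analogue $\bar L_{t_i}:=\mathbb{P}(\inf_{s\le t_i}\tilde Y_s\le 0)$ at the price of an $O(h^{1/2-\delta})$ term. Both $L_{t_i}$ and $\bar L_{t_i}$ can then be written, conditionally on $Y_0$, as $\mathbb{P}(\exists s\le t_i:W_s<f(s))$ and $\mathbb{P}(\exists s\le t_i:W_s<\tilde f(s))$ with barriers $f(s)=\alpha L_s-Y_0$ and $\tilde f(s)=\alpha\tilde L_s-Y_0$ respectively. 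Since $f$ and $\tilde f$ do not coincide on $[0,T^*]$, I would introduce the hybrid barrier $g:=f$ on $[0,T^*]$, $g:=\tilde f$ on $(T^*,T]$, with corresponding probability $L^*_{t_i}$, and split
\[
|L_{t_i}-\bar L_{t_i}|\le|L_{t_i}-L^*_{t_i}|+|L^*_{t_i}-\bar L_{t_i}|.
\]

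The first summand is the clean case of \eqref{novikov_theorem}: $f$ is non-decreasing, Lipschitz on $[T^*,T]$, and coincides with $g$ on $[0,T^*]$, so after integration in $Y_0$ this piece is bounded by $2\alpha(\hat B(T^*)^{-(1-\beta)/2}+1/\sqrt{T^*})\varepsilon(T)$, using also $1/\sqrt T\le 1/\sqrt{T^*}$. The second summand is harder, because $g$ and $\tilde f$ agree on $(T^*,T]$ but differ on $[0,T^*]$, so \eqref{novikov_theorem} cannot be used in that orientation. Here I would exploit the monotonicity $\tilde L_{t_j}\le L_{t_j}$ already established within the induction of the proof of Theorem \ref{explicit_theorem}, which yields $\tilde f\le g$ on $[0,T^*]$ and hence inclusion of the two survival events up to $T^*$; combined with the strong Markov property at $T^*$ (where $g=\tilde f$ afterwards), this reduces $|L^*_{t_i}-\bar L_{t_i}|$ to a constant times $L_{T^*}-\bar L_{T^*}$, which by Theorem \ref{explicit_theorem} and Lemma \ref{lemma_1_2_delta_tilde} is $O(h^{1/2-\delta})$.

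Collecting all three contributions yields the fixed-point inequality
\[
\varepsilon(T)\le 2\alpha\left(\hat B(T^*)^{-\frac{1-\beta}{2}}+\frac{1}{\sqrt{T^*}}\right)\varepsilon(T)+C\,h^{1/2-\delta},
\]
and under assumption \eqref{theorem2_condition} the prefactor is strictly less than $1$, so one solves for $\varepsilon(T)\le C'h^{1/2-\delta}$ with $C'$ independent of $h$ and of $T<t_*$, completing the proof. The main obstacle is the second summand $|L^*_{t_i}-\bar L_{t_i}|$, where the Novikov estimate has the wrong orientation and one must instead use the monotonicity of $\tilde L$ with respect to $L$ together with a Markov conditioning argument at $T^*$; a secondary technical point is the bookkeeping of $\alpha$ and the Lipschitz constant so that the resulting prefactor lines up exactly with \eqref{theorem2_condition}.
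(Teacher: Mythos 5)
Your argument coincides with the paper's proof of Theorem~\ref{explicit_theorem_improved}: the paper likewise introduces the hybrid drift $l_t$ ($=L_t$ on $[0,T^*]$, $=\tilde L_t$ on $(T^*,T]$) and the process $Y^l_t=Y_0+W_t-\alpha l_t$, bounds the piece where the barriers agree on $[0,T^*]$ by the modified Novikov estimate~\eqref{novikov_theorem} with $K=\hat B(T^*)^{-(1-\beta)/2}$, reduces the piece where they agree after $T^*$ to $\bigl|\mathbb{P}(\inf_{s\le T^*}\tilde Y_s>0)-\mathbb{P}(\inf_{s\le T^*}Y_s>0)\bigr|=O(h^{1/2-\delta})$ controlled via Theorem~\ref{explicit_theorem}, and closes with the contraction condition~\eqref{theorem2_condition}. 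The only cosmetic differences are that the paper effects the reduction to $[0,T^*]$ by a direct event inclusion (using $Y^l\le\tilde Y$ with equality after $T^*$) rather than a strong-Markov conditioning, and runs an explicit recursion on constants $C_i$ (absorbing the continuous-time supremum over mesh intervals through the extra $Kh$ term of~\eqref{max_sup_ineq}) in place of your single fixed-point inequality for $\varepsilon(T)$.
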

\begin{proof}
We first split again the error by
\begin{multline}
	\label{split_eq}
	| \tilde{L}_{t_i} - L_{t_i} | = \left| \mathbb{P}\left(\min_{j < i} \tilde{Y}_{t_j} > 0\right) - \mathbb{P}\left(\inf_{s \le t_i} Y_s > 0\right) \right| \\
	\le \left| \mathbb{P}\left(\inf_{s \le t_i} \tilde{Y}_{s} > 0 \right) - \mathbb{P}\left(\inf_{s \le t_i}  Y_{s} > 0\right)  \right| + \left|  \mathbb{P}\left(\min_{j < i} \tilde{Y}_{t_j} > 0\right) -  \mathbb{P}\left(\inf_{s \le t_i} \tilde{Y}_s > 0\right) \right|.
\end{multline}
The second term can be estimated from Lemma \ref{lemma_1_2_delta_tilde}, (\ref{eqn_1_2_delta_tilde}).
Again, we shall then proceed by induction. We have already shown that $| \tilde{L}_{t_i} - L_{t_i} | \le C_{i} h^{\frac{1}{2} - \delta}$ for $t_i \le T^{*}$ according to Theorem \ref{explicit_theorem}.

Consider $T^{*} < t_i \le T$. Assume we have shown that $|L_{t_j} - \tilde{L}_{t_j}| \le C_j h^{\frac{1}{2} - \delta}$ for $j < i$.
We want to derive $C_i$ such that 
$|L_{t_i} - \tilde{L}_{t_i}| \le  C_i h^{\frac{1}{2} - \delta}$ and where all $C_i$ are bounded independent of $h$.
First, consider an intermediate point $s \in (t_{i-1}, t_i)$, then 
\begin{eqnarray*}
	L_{s} - \tilde{L}_{s}  \le L_{t_{i-1}} + K (s - t_{i-1}) - \tilde{L}_{t_{i-1}} 
	\le   L_{t_{i-1}} - \tilde{L}_{t_{i-1}} +  K h, 
\end{eqnarray*}
with $K$ the Lipschitz constant of $L$, and thus
\begin{equation}
	\label{max_sup_ineq}
	\sup_{s \le t_i} |L_{s} - \tilde{L}_{s}|  \le \max \left(\max_{j < i} |L_{t_{j}} - \tilde{L}_{t_{j}}| +  K h, |L_{t_{i}} - \tilde{L}_{t_{i}}| \right).
\end{equation}


Now we show that $|L_{t_i} - \tilde{L}_{t_i}| \le C_{t_{i}} h^{\frac{1}{2} - \delta}$.
Consider 
\begin{equation*}
	l_t = 
	\begin{cases}
		L_t, \quad t \le T^{*}, \\
		\tilde{L}_t, \quad t > T^{*},
	\end{cases}
\end{equation*}
and $Y^l_t = Y_0 + W_t - \alpha l_t$. Then, 
\begin{multline}
 \left| \mathbb{P}\left(\inf_{s \le t_i} \tilde{Y}_{s}> 0\right) -  \mathbb{P}\left(\inf_{s \le t_i}Y_{s} > 0\right) \right|  \\
 \le  \left| \mathbb{P}\left(\inf_{s \le t_i} \tilde{Y}_{s}> 0\right) -  \mathbb{P}\left(\inf_{s \le t_i}Y^l_{s} > 0\right) \right|  +  \left| \mathbb{P}\left(\inf_{s \le t_i} Y^l_{s}> 0\right) -  \mathbb{P}\left(\inf_{s \le t_i}Y_{s} > 0\right) \right|.
 \label{eqn:twoterms}
\end{multline}
To estimate the first term, we can write
\begin{eqnarray*}
	\left| \mathbb{P}\left(\inf_{s \le t_i} \tilde{Y}_{s}> 0\right) -  \mathbb{P}\left(\inf_{s \le t_i}Y^l_{s} > 0\right) \right| 
	&=& \mathbb{E} \left[ \mathbbm{1}_{\exists t \in [0, t_i]: Y^l_{t} \le 0  \}, \forall s \in [0, t_i] \tilde{Y}_{s} > 0 } \right] \\
 &=& \mathbb{E} \left[ \mathbbm{1}_{\exists t \in [0, T^{*}]: Y^l_{t} \le 0  \}, \forall s \in [0, t_i] \tilde{Y}_{s} > 0 } \right] \\
	&\le& \mathbb{E} \left[ \mathbbm{1}_{\exists t \in [0, T^{*}]: Y^l_{t} \le 0  \}, \forall s \in [0, T^{*}] \tilde{Y}_{s} > 0 } \right] \\
	&=& \left| \mathbb{P}\left(\inf_{s \le T^{*}} \tilde{Y}_{s}> 0\right) -  \mathbb{P}\left(\inf_{s \le T^{*}}Y^l_{s} > 0\right) \right|,
\end{eqnarray*}
where we have used in the second line that
since $l_t = \tilde{L}_t$ for $t > T^{*}$, hitting after $T^{*}$ does not affect the difference.
For $t \le T^{*}$, $l_t = L_t$. Then, using Theorem \ref{explicit_theorem},
\begin{multline*}
	\left| \mathbb{P}\left(\inf_{s \le t_i} \tilde{Y}_{s}> 0\right) -  \mathbb{P}\left(\inf_{s \le t_i}Y^l_{s} > 0\right) \right| \le \left| \mathbb{P}\left(\inf_{s \le T^{*}} \tilde{Y}_{s}> 0\right) -  \mathbb{P}\left(\inf_{s \le T^{*}}Y_{s} > 0\right) \right| \le \bar{C}^{T^{*}}  h^{\frac{1}{2} - \delta}.
\end{multline*}

For the second term in (\ref{eqn:twoterms}), $L_t$ is Lipschitz on $[T^{*}, T]$ with $K = \hat{B} (T^{*})^{-\frac{1-\beta}{2}}$ and we can apply \eqref{novikov_theorem}. Thus,
\begin{eqnarray}
\label{expansion_novikov}
 \left| \mathbb{P}\left(\inf_{s \le t_i} \tilde{Y}_{s}> 0\right) -  \mathbb{P}\left(\inf_{s \le t_i}Y_{s} > 0\right) \right|  \le  2 \alpha \left(K + \frac{1}{\sqrt{t_i}}\right) \sup_{s \le t_i} | L_{s} - \tilde{L}_{s} | + \bar{C}^{T^{*}} h^{\frac{1}{2} - \delta} \\
	\label{expansion1_improved}
	 \le  2 \alpha \left(K + \frac{1}{\sqrt{t_i}}\right)\max \left(\max_{j < i} |L_{t_{j}} - \tilde{L}_{t_{j}}| +  K h, |L_{t_{i}} - \tilde{L}_{t_{i}}| \right) +  \bar{C}^{T^{*}} h^{\frac{1}{2} - \delta},
\end{eqnarray}
using \eqref{max_sup_ineq}.
Moreover, by (\ref{eqn_1_2_delta_tilde}) and \eqref{expansion1_improved}, \eqref{split_eq} can be written as
\begin{equation*}
	| L_{t_i} - \tilde{L}_{t_i}| \le  2 \alpha \left(K + \frac{1}{\sqrt{t_i}}\right)\max \left(\max_{j < i} |L_{t_{j}} - \tilde{L}_{t_{j}}| +  K h, |L_{t_{i}} - \tilde{L}_{t_{i}}| \right)+ \bar{\gamma} h^{\frac{1}{2} - \delta},
\end{equation*}
where $\bar{\gamma} = \tilde{\gamma} + \bar{C}^{T^{*}}$.

Taking into account \eqref{theorem2_condition}, we have
\begin{equation*}
	| L_{t_i} - \tilde{L}_{t_i}| \le \max \left(\left( 2 \alpha \left(K + \frac{1}{\sqrt{t_i}}\right)  \max_{j < i} C_j + \bar{\gamma} +\tilde{\varepsilon} \right) h^{\frac{1}{2} - \delta}, \frac{\bar{\gamma}}{1-  2\alpha \left(K + \frac{1}{\sqrt{t_i}}\right)} h^{\frac{1}{2} - \delta} \right),
\end{equation*}
where $\tilde{\varepsilon} =  2\alpha \left(K + \frac{1}{\sqrt{T^{*}}}\right) K  h^{\frac{1}{2} + \delta} $.

As a result, we have the following inequality for $C_i$,
\begin{equation*}
	C_i \le \max \left(\left( 2\alpha \left(K + \frac{1}{\sqrt{t_i}}\right)  \max_{j < i} C_j + \bar{\gamma} +\tilde{\varepsilon} \right), \frac{\bar{\gamma}}{1-  2\alpha \left(K + \frac{1}{\sqrt{t_i}}\right)} \right).
\end{equation*}
Hence,
%
because of \eqref{theorem2_condition},
$C_i$ is bounded independent of $h$, and by induction we get the result. 

\end{proof}

\subsection{Monte Carlo simulation of discretized process}
\label{sec:MC}

In this section, we prove the convergence in probability of
\[
\hat{L}_{t_i} = \frac{1}{N}\sum_{k = 1}^N  \mathbbm{1}_{\{ \min_{j < i} \hat{Y}_{t_j}^{(k)}  \le 0 \}}
\]
in Algorithm \ref{algo1} to $\tilde{L}_{t_i}$ as $N \to \infty$. 
We note that we cannot directly apply the law of large numbers, as the summands 
are dependent through $\hat{L}_{t_j}^N, j < i$. However, we see below that the dependence diminishes (i.e., the covariance goes to zero)
as $N \to \infty$, which easily gives convergence, albeit without a Central Limit Theorem-type error estimate or a rate for the variance.

First, we formulate an auxiliary lemma.
\begin{lemma}
	\label{lemma_mc}
	\label{lemma2_mc}
	Consider $i \le n$. Assume for all $j < i$
	\begin{equation}
		\label{lemma_mc_assumption}
		\hat{L}_{t_j}^N \xrightarrow[]{\mathbb{P}} \tilde{L}_{t_j}.
	\end{equation}
	Then,
	\begin{eqnarray}
		\label{lemma_convergence_eq}
		\mathbb{E}[\hat{L}_{t_i}^N] &\xrightarrow[N \to \infty]{}& \tilde{L}_{t_i} \\
		\label{lemma2_convergence_eq}
		\mathbb{V}[\hat{L}_{t_i}^N]  &\xrightarrow[N \to \infty]{}&  0.
	\end{eqnarray}
\end{lemma}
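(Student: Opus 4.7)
The plan is to exploit the exchangeability of the $N$ Monte Carlo paths together with the fact that, by the induction hypothesis, the only source of inter-summand dependence --- the shared random vector $\hat{L}^N := (\hat{L}_{t_j}^N)_{j<i}$ --- converges in probability to the deterministic limit $L^* := (\tilde{L}_{t_j})_{j<i}$. Writing
\[
\phi^{(k)}(\ell) := \mathbbm{1}_{\{\min_{j<i} (Y_0^{(k)} + W_{t_j}^{(k)} - \alpha \ell_j) \le 0\}}, \qquad \ell \in \mathbb{R}^{i-1},
\]
we have $\hat{L}_{t_i}^N = \frac{1}{N}\sum_{k=1}^N \phi^{(k)}(\hat{L}^N)$, with the pairs $(Y_0^{(k)}, W^{(k)})$ i.i.d.\ across $k$, and $\phi^{(1)}(L^*) = \mathbbm{1}_{\{\min_{j<i}\tilde{Y}_{t_j}^{(1)} \le 0\}}$ has mean $\tilde{L}_{t_i}$.

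For \eqref{lemma_convergence_eq}, exchangeability gives $\mathbb{E}[\hat{L}_{t_i}^N] = \mathbb{E}[\phi^{(1)}(\hat{L}^N)]$. By the subsequence criterion, to show $\phi^{(1)}(\hat{L}^N) \to \phi^{(1)}(L^*)$ in probability it suffices to work along an arbitrary subsequence on which $\hat{L}^N \to L^*$ a.s.; continuity of $\ell \mapsto \phi^{(1)}(\omega,\ell)$ at $L^*$ for $\mathbb{P}$-a.e.\ $\omega$ then gives a.s.\ convergence of the indicators. This continuity is exactly where Lemma \ref{density_estimate} enters: the density bound forces $\mathbb{P}(\min_{j<i}\tilde{Y}_{t_j} = 0) = 0$, so the limiting indicator is a.s.\ insensitive to small perturbations of $\ell$ near $L^*$. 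Bounded convergence (the integrands lie in $[0,1]$) then yields $\mathbb{E}[\phi^{(1)}(\hat{L}^N)] \to \tilde{L}_{t_i}$.

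For \eqref{lemma2_convergence_eq}, exchangeability yields
\[
\mathbb{V}[\hat{L}_{t_i}^N] = \frac{1}{N}\mathbb{V}[\phi^{(1)}(\hat{L}^N)] + \frac{N-1}{N}\mathrm{Cov}\bigl(\phi^{(1)}(\hat{L}^N), \phi^{(2)}(\hat{L}^N)\bigr).
\]
The first term is trivially $O(1/N)$ since the summand lies in $[0,1]$. For the covariance, the same continuity argument applied to the product gives $\phi^{(1)}(\hat{L}^N)\phi^{(2)}(\hat{L}^N) \to \phi^{(1)}(L^*)\phi^{(2)}(L^*)$ in probability. Since $(Y_0^{(1)}, W^{(1)})$ and $(Y_0^{(2)}, W^{(2)})$ are independent and $L^*$ is deterministic, the two limiting indicators are independent with common mean $\tilde{L}_{t_i}$. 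Bounded convergence gives $\mathbb{E}[\phi^{(1)}(\hat{L}^N)\phi^{(2)}(\hat{L}^N)] \to \tilde{L}_{t_i}^2$, and combined with $\mathbb{E}[\phi^{(k)}(\hat{L}^N)] \to \tilde{L}_{t_i}$ the covariance vanishes.

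The main obstacle is pulling the convergence in probability $\hat{L}^N \to L^*$ through the discontinuous indicators $\phi^{(k)}$; this is resolved by the no-atom-at-zero property of the running minimum supplied by Lemma \ref{density_estimate}. Once continuity at $L^*$ is in place, the rest is routine exchangeability algebra plus bounded convergence.
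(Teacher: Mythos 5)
Your argument is correct, and it reaches the conclusion by a genuinely different route than the paper. Where you use the subsequence criterion for convergence in probability, a.e.\ continuity of the (random) map $\ell \mapsto \phi^{(k)}(\omega,\ell)$ at $L^*$ supplied by the no-atom property from Lemma~\ref{density_estimate}, and then bounded convergence, the paper instead sandwiches $\mathbb{E}[\mathbbm{1}_{\{\min_j \hat{Y}^{(k)}_{t_j}>0\}}]$ between two expectations involving $-\alpha\min_{j<i}(\tilde L_{t_j}-\hat L^N_{t_j})$ and $-\alpha\max_{j<i}(\tilde L_{t_j}-\hat L^N_{t_j})$, splits each on the event $\{|\tilde L - \hat L^N|\le\varepsilon\}$ versus its complement, bounds the first piece by $\alpha\varepsilon$ times the density bound of Lemma~\ref{density_estimate} and the second by $\mathbb{P}(\max_j(\tilde L_{t_j}-\hat L^N_{t_j})>\varepsilon)$, and then sends $N\to\infty$ and $\varepsilon\to 0$. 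Both proofs rely on exactly the two same essential inputs --- the induction hypothesis that $\hat L^N_{t_j}\to\tilde L_{t_j}$ in probability and the density bound guaranteeing $\mathbb{P}(\min_{j<i}\tilde Y_{t_j}=0)=0$ --- and both handle the covariance via the same exchangeability decomposition and the same argument applied to the product of two indicators. Your version is shorter and more conceptual (a continuous-mapping argument), whereas the paper's explicit $\varepsilon$-bracketing is slightly more elementary and avoids invoking the subsequence characterization; neither yields a quantitative rate, since both ultimately lean on the purely qualitative $\mathbb{P}(\cdot>\varepsilon)\to 0$. One small point worth making explicit in your write-up: $\hat{L}^N$ is not independent of the first path $(Y_0^{(1)},W^{(1)})$, but this does not matter because the subsequence/continuity step only needs a.s.\ convergence of the argument and a.s.\ continuity of the integrand, not independence; independence is invoked only after passing to the limit, where $L^*$ is deterministic.
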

The proof is given in Appendix \ref{lemma_mc_proof}.
Now we can deduce the convergence instantly.
\begin{proof}[Proof of Theorem \ref{mc_theorem}]
	The proof is immediate by induction. The statement is true for $i = 0$. Now take $i \ge 1$.
	By Lemma \ref{lemma_mc}, there exists $N^{*}$ such that for all $N > N^{*}$, 
	\begin{equation*}
		| \mathbb{E}[\hat{L}_{t_i}^N] - \tilde{L}_{t_i}| \le \frac{\varepsilon}{2}.
	\end{equation*}
	Thus, by Chebyshev's inequality, we have
	\begin{equation*}
		\mathbb{P}( | \hat{L}_{t_i}^N - \tilde{L}_{t_i} | > \varepsilon) \le \mathbb{P} \left( | \hat{L}_{t_i}^N - \mathbb{E}[\hat{L}_{t_i}^N] | > \frac{\varepsilon}{2}\right) \le \frac{4 \mathbb{V}[\hat{L}_{t_i}^N]}{\varepsilon^2}.
	\end{equation*}
	Using again Lemma \ref{lemma2_mc}, we have that $\mathbb{V}[\hat{L}_{t_i}^N]  \xrightarrow[N \to \infty]{}  0$. Hence,
	\begin{equation*}
		\hat{L}_{t_i}^N \xrightarrow[N \to \infty]{\mathbb{P}} \tilde{L}_{t_i},
	\end{equation*}
	for $i$ and by induction we have proved the theorem.
\end{proof}

\subsection{Brownian bridge convergence improvement}
\label{sec:BB}

In this section, we prove Theorem \ref{bb_theorem}, which ascertains the uniform convergence (in $t$) of $\dtilde{L}$ to $L$ at the improved rate.

\begin{proof}[Proof of Theorem \ref{bb_theorem}]
	We shall proceed by induction. Assume we have shown that $|\dtilde{L}_{t_j} - L_{t_j}| \le C_j h^{\frac{1 + \beta}{2}}$ for all $j < i$ with some $C_j>0$, and we want to estimate $|\dtilde{L}_{t_i} - L_{t_i}|$. First, we have
	\begin{equation*}
		\sup_{t_j \le s < t_{j+1}} |\dtilde{L}_s - L_s| 
		\le  |\dtilde{L}_{t_j} - L_{t_j}| + \hat{B} h^{\frac{1 + \beta}{2}} \le (C_j + \hat{B}) h^{\frac{1 + \beta}{2}},
	\end{equation*}
	since $ L'_{\zeta} \le \hat{B} \zeta^{-\frac{1-\beta}{2}}$.
	Now consider 
	\begin{align*}
		|\dtilde{L}_{t_i} - L_{t_i}| &= \left| \mathbb{P} \left( \inf_{s \le t_i} \dtilde{Y}_s > 0 \right) - \mathbb{P} \left( \inf_{s \le t_i} {Y}_s > 0 \right) \right|  \\
		  &= \left| \mathbb{P} \left( \inf_{s \le t_i} (Y_s + \alpha( L_s - \dtilde{L}_s)) > 0 \right) - \mathbb{P} \left( \inf_{s \le t_i} {Y}_s > 0 \right) \right| \\
		&\le \left| \mathbb{P} \left( \inf_{s \le t_i} Y_s > -\alpha \sup_{s < t_i} |\dtilde{L}_s - L_s| \right) - \mathbb{P} \left( \inf_{s \le t_i} {Y}_s > 0 \right) \right| \\
		&\le  \mathbb{P} \left( \inf_{s \le t_i} Y_s > -\alpha \max_{j < i} (C_j + \hat{B}) h^{\frac{1 + \beta}{2}} \right) - \mathbb{P} \left( \inf_{s \le t_i} {Y}_s > 0 \right) \\
		&\le \alpha \max_{j < i} (C_j + \hat{B}) 
		\sup_{\theta \in [0,1]} \varphi \left(  - \theta \alpha \max_{j < i} (C_j + \hat{B}) h^{\frac{1 + \beta}{2}}  \right)  h^{\frac{1 + \beta}{2}},
	\end{align*}
	where $\varphi_i(x)$ is the density of $\inf_{s < t_i} Y_s$. 
	
	Then, using Lemma \ref{density_estimate}, we have
	\begin{equation*}
		 C_i \le \alpha B \max_{j < i} (C_j + \hat{B})  \left[ \sqrt{\frac{2 t_i}{\pi}} + \alpha \tilde{B} t_i^{\frac{1 + \beta}{2}}  \right]^{\beta}  \le \gamma \sum_{k = 0}^{i} ( \alpha B)^k \prod_{j = 1}^k \left[\sqrt{\frac{2 t_j}{\pi}} + \alpha \tilde{B} t_j^{\frac{1 + \beta}{2}}  \right]^{\beta},
	\end{equation*}
	where $\gamma = \alpha B \hat{B}  \left[ \sqrt{\frac{2 T}{\pi}} + \alpha \tilde{B} T^{\frac{1 + \beta}{2}}  \right]^{\beta}$.
	
	Thus, $C_i$ is bounded independent of $h$ and $i$ 
%
	by \eqref{theorem_condition}.
	By induction we get \eqref{theor4_result}.
	
\end{proof}

The proof of Theorem \ref{bb_theorem} indicates that the order is limited by the behaviour of $L$ for small $t$.
The next result shows that a locally refined time mesh achieves convergence order 1 for all $\beta$.
\begin{corollary}
Consider a non-uniform time mesh $t_i = (i h)^{\frac{2}{1 + \beta}}$ for $0 \le i \le n$ with $h = T^{\frac{1 + \beta}{2}}/n$. Then, there exists $C_1 > 0$, independent of $h$, such that 
\begin{equation*}
	\label{theor4_result_improved}	
	\max_{i \le n} | \dtilde{L}_{t_i} - L_{t_i} | \le C_1 h.
\end{equation*}
\end{corollary}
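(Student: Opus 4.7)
The plan is to re-run the induction from the proof of Theorem \ref{bb_theorem}, exploiting that the non-uniform mesh $t_i = (ih)^{2/(1+\beta)}$ is designed precisely so that $L$ has uniformly $O(h)$ increments across all subintervals. The key observation is that with this choice $t_{j+1}^{(1+\beta)/2} - t_j^{(1+\beta)/2} = h$ for every $j$, and therefore by \eqref{L_t_prime_ineq},
\[
L_{t_{j+1}} - L_{t_j} \le \int_{t_j}^{t_{j+1}} \hat{B}\, \zeta^{-\frac{1-\beta}{2}}\, d\zeta = \tilde{B}\, h,
\]
including on the first subinterval $[0,h^{2/(1+\beta)}]$, where in the uniform-mesh analysis of Theorem \ref{bb_theorem} the singularity of $L'$ at $0$ was exactly what forced the worse rate $h^{(1+\beta)/2}$.

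With this in hand, I would set up the same induction as before: assume $|\dtilde{L}_{t_j} - L_{t_j}| \le C_j h$ for all $j<i$. Since $\dtilde{L}_s \equiv \dtilde{L}_{t_j}$ on $[t_j,t_{j+1})$, the triangle inequality combined with the uniform $L$-increment bound gives
\[
\sup_{t_j \le s < t_{j+1}} |\dtilde{L}_s - L_s| \le (C_j + \tilde{B})\, h,
\]
hence $\sup_{s<t_i} |\dtilde{L}_s - L_s| \le (\max_{j<i} C_j + \tilde{B})\, h$. Substituting this into the same chain of inequalities as in the proof of Theorem \ref{bb_theorem} (with $h^{(1+\beta)/2}$ replaced throughout by $h$) and applying Lemma \ref{density_estimate} to control the density of $\inf_{s<t_i} Y_s$ just below zero yields
\[
C_i \le \alpha B \left[\sqrt{\tfrac{2t_i}{\pi}} + \alpha \tilde{B}\, t_i^{\frac{1+\beta}{2}}\right]^\beta (\max_{j<i} C_j + \tilde{B}).
\]

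By Assumption \ref{assumption_T} and the definition \eqref{theorem_condition} of $T^*$, the prefactor $\alpha B[\,\cdot\,]^\beta$ is strictly less than $1$ for $t_i \le T < T^*$, so the recursion is a contraction and the $C_i$ stay bounded uniformly in $i$ and $h$, delivering $\max_i|\dtilde{L}_{t_i}-L_{t_i}|\le C_1 h$ by induction. No new ingredient beyond the mesh-design observation is needed: the Brownian-bridge step of the scheme remains exact at the level of $\dtilde{L}$, which is defined via the continuous infimum of the piecewise-drift process, so replacing the uniform mesh by the refined one only changes how the timestepping error accumulates between grid points, not the validity of any hitting-probability identity. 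The most delicate point, as in Theorem \ref{bb_theorem}, is ensuring the contraction constant stays strictly below $1$, and this is inherited directly from Assumption \ref{assumption_T}.
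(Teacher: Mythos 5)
Your proof is correct and takes essentially the same approach as the paper, whose proof of the corollary consists of the single sentence that one repeats the steps of Theorem~\ref{bb_theorem}. You supply exactly the calculation that sentence leaves implicit — that $t_{j+1}^{(1+\beta)/2}-t_j^{(1+\beta)/2}=h$ makes each increment of $L$ bounded by $\tilde{B}h$ even on the first subinterval — and your use of $\tilde{B}=2\hat{B}/(1+\beta)$ as the increment constant is in fact the sharp one (and slightly cleaner than the $\hat{B}$ appearing in the paper's statement of the corresponding bound in Theorem~\ref{bb_theorem}).
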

\begin{proof}
	The proof follows by repeating all the steps of the proof of Theorem \ref{bb_theorem}.
\end{proof}

\section{Numerical experiments}
\label{sec:numer}

In this section, we demonstrate that the proven convergence orders in the timestep of $1/2$, $(1+\beta)/2$, and 1 for the different methods are indeed sharp in the case
of regular solutions;
that the empirical variance of $N$-sample estimators is $1/N$; and that the method also converges experimentally in the presence of blow-up.
To show this, we study three test cases with varying regularity of the initial data and of the loss function.

\subsection{Lipschitz initial data and no blow-up}

In our first experiment,
we choose $Y_0$ such that $\frac{1}{Y_0} \sim \exp(\lambda)$, which guarantees that the density decays exponentially near zero.
We take $\lambda = 1$ in our experiments, and pick the parameters in (\ref{mckean-vlasov1}) to be $\alpha=0.8$, $T=2$.
The solution is found to be continuous.

We perform numerical simulations using Algorithms \ref{algo1} and \ref{algo2} with $N = 2 \times 10^5$ particles and different time meshes varying from $50$ to $3200$ points.
To estimate the error, we consider the difference $|\tilde{L}_T^{2n}-\tilde{L}_T^n|$ between the solutions with $n$ and $2n$ timesteps, respectively,
computed with the same paths.
The results, presented in Figure \ref{loss_fig1}, agree with the theory: for Algorithm \ref{algo1}, we get the convergence rate $\frac{1}{2}$, and for Algorithm \ref{algo2}, the rate is $1$, because the initial distribution is regular enough around $0$, i.e.\ $\beta =1$ in \eqref{assumption_Y0}.
\begin{figure}[h]
	\begin{center}
		\subfloat[]{\includegraphics[width=0.5\textwidth]{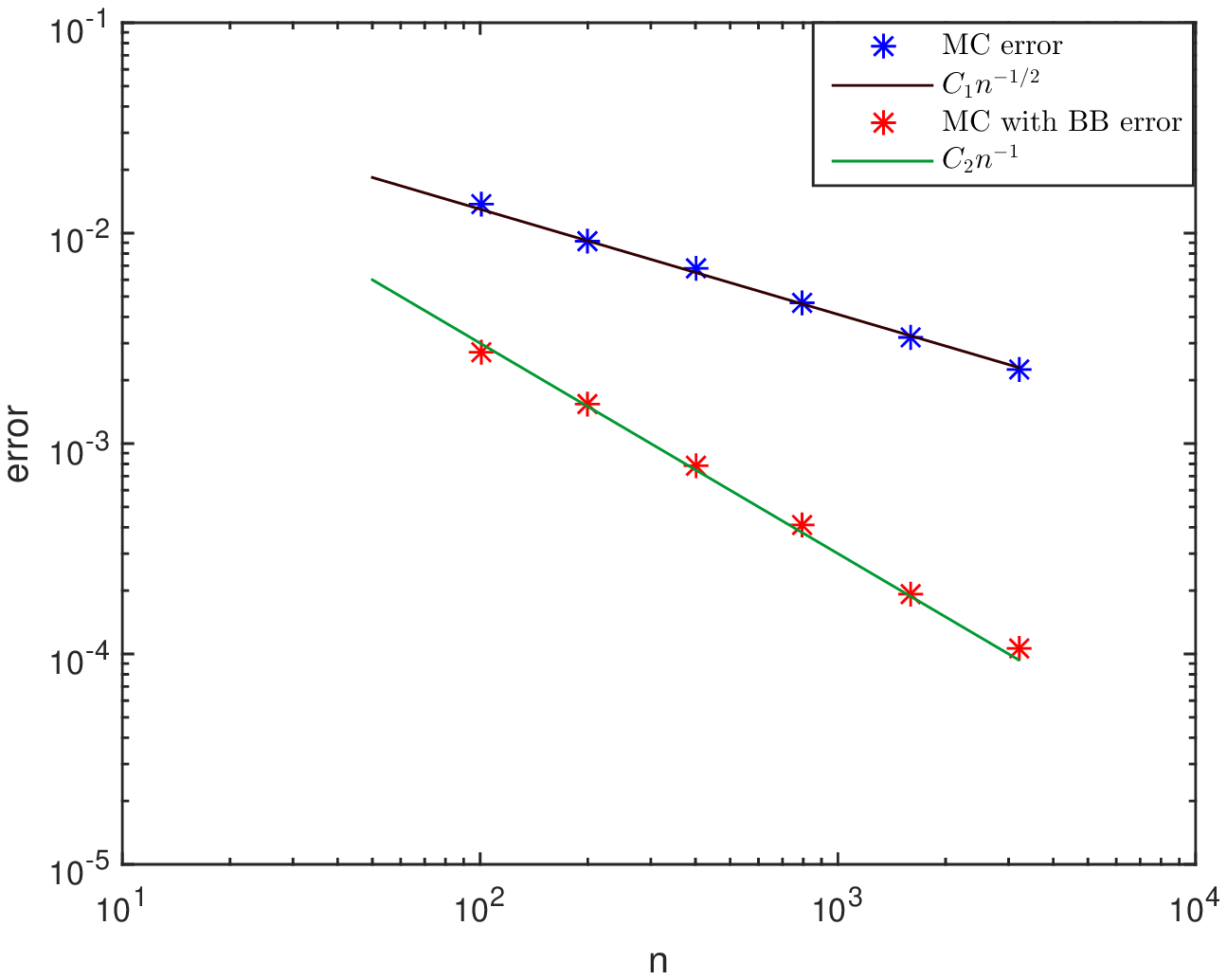}}
		\subfloat[]{\includegraphics[width=0.5\textwidth]{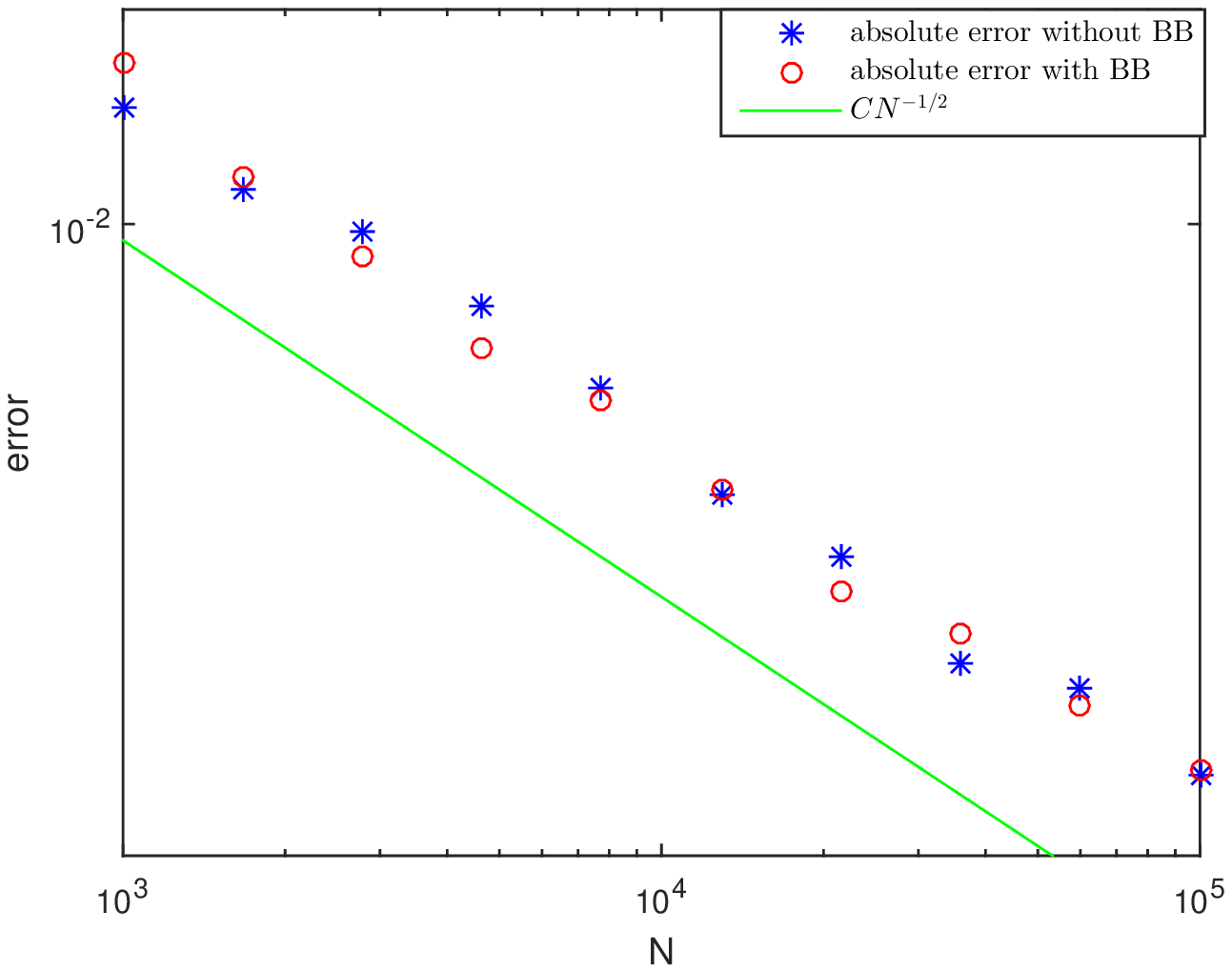}}\\
	\end{center}
	\vspace{-10pt}
	\caption{Error of the loss process at $t=T$ for $\frac{1}{Y_0} \sim \exp(1)$: 
	(a) for increasing number $n$ of timesteps; 
	(b) for increasing number $N$ of samples, 
	both for Algorithms \ref{algo1} and \ref{algo2}.}
 	\label{loss_fig1}\label{err_fig}
\end{figure}
We also investigate the convergence rate of $\hat{L}_{t_i}$ to $\tilde{L}_{t_i}$ and $\bar{L}_{t_i}$ to $\dtilde{L}_{t_i}$ empirically. 
In order to compute the benchmark solution, we used $N = 5 \times 10^7$ particles. From the results we conclude that both Algorithm \ref{algo1} and \ref{algo2} have the convergence rate $\frac{1}{2}$ in $N$.

To illustrate the dependence on the parameter $\alpha$, we include plots for $L_t$ and $L'_t$ for different values of $\alpha$. We evaluate $L'_t$ numerically using a central finite difference approximation. In order to reduce the Monte Carlo noise, we increase $N$ to $5 \times 10^{7}$ and reduce $n$ to $200$.
\begin{figure}[h]
	\begin{center}
		\subfloat[]{\includegraphics[width=0.5\textwidth]{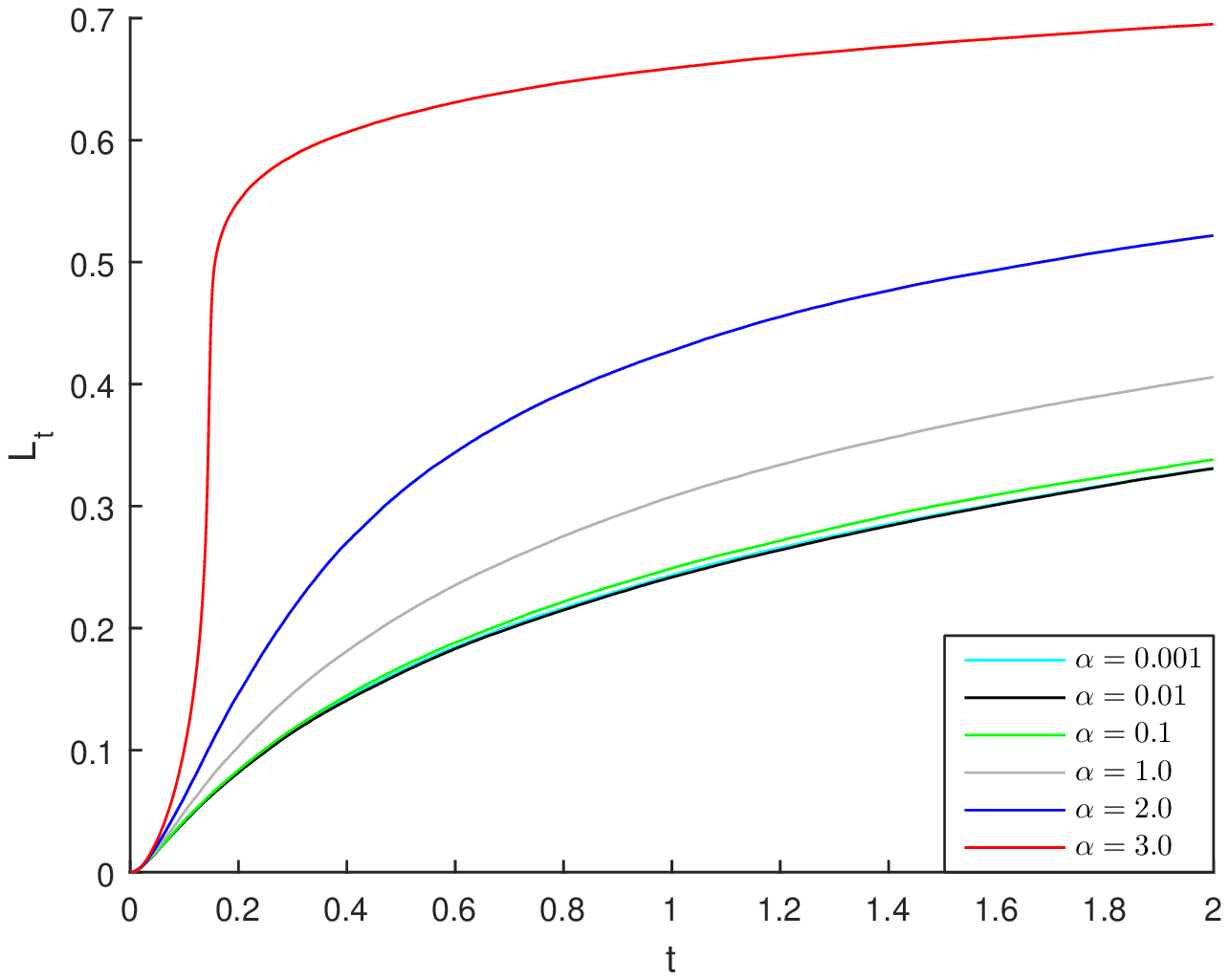}}
		\subfloat[]{\includegraphics[width=0.5\textwidth]{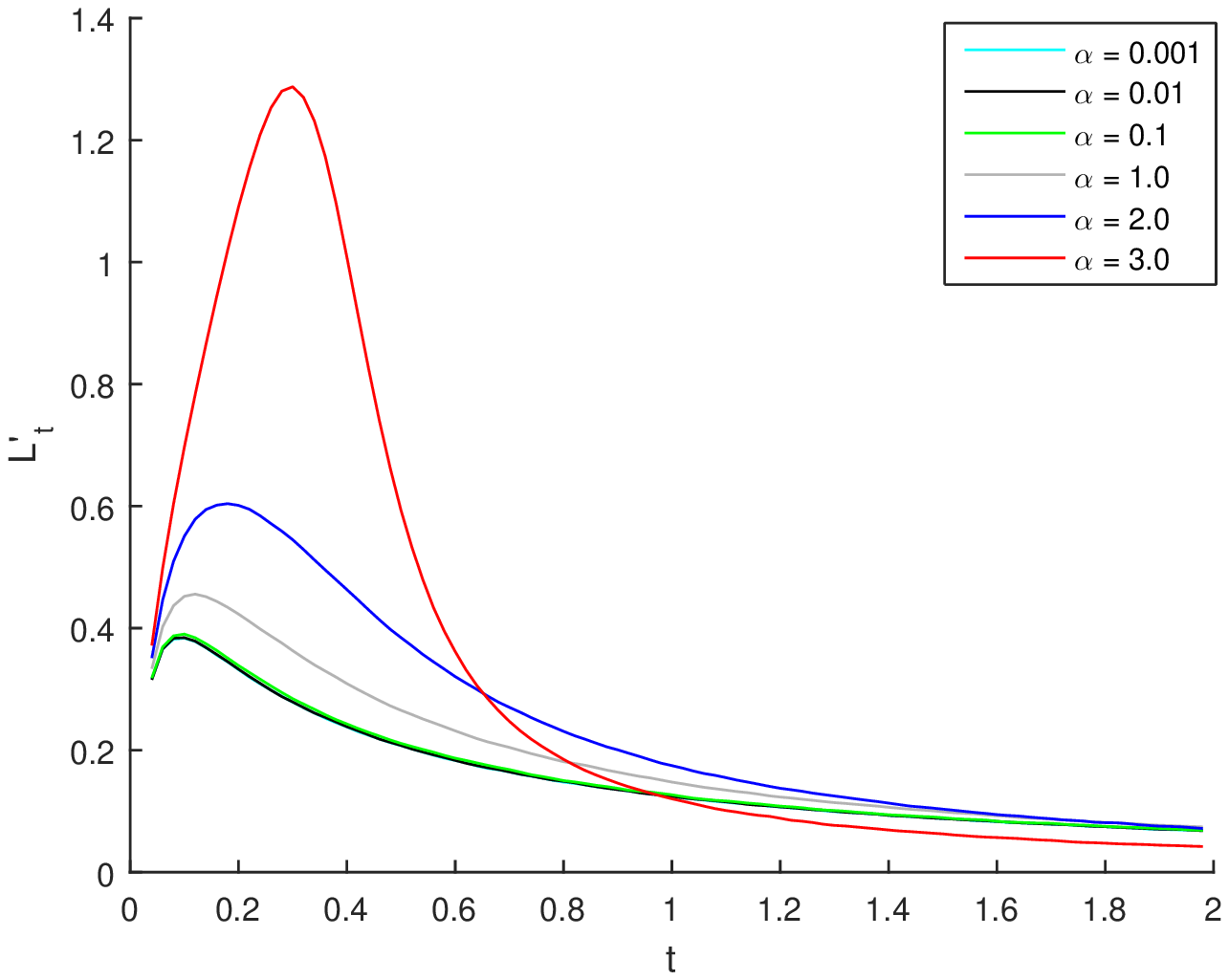}}\\
	\end{center}
	\vspace{-10pt}
	\caption{$L_t$ and $L'_t$ for different values of $\alpha$.}
 	\label{err_fig_alpha}
\end{figure}

\subsection{H\"older $1/2$ initial data and no blow-up}

In another example we again consider $\alpha=0.8$ and $T=2$, but choose $Y_0 \sim \gammadistr(1 + \beta, 1/2)$, such that we have that $f_{Y_0}(x) \le C x^{\beta}$ for $x> 0$ and some $C>0$. We choose $\beta = \frac{1}{2}$. The solution is again found to be continuous.

We perform numerical simulations using Algorithm \ref{algo1}, and Algorithm \ref{algo2} on uniform and non-uniform meshes varying from $50$ to $3200$ points 
and with $N = 2\times 10^5$ particles.
The results are presented in Figure \ref{loss_fig2}. As predicted by the theory, for Algorithm \ref{algo1} we get the convergence rate $\frac{1}{2}$, for Algorithm \ref{algo2} on uniform meshes rate $\frac{1+\beta}{2} = \frac{3}{4}$, and for Algorithm \ref{algo2} on non-uniform meshes rate $1$.
\begin{figure}[h]
	\begin{center}
		\subfloat[]{\includegraphics[width=0.525\textwidth]{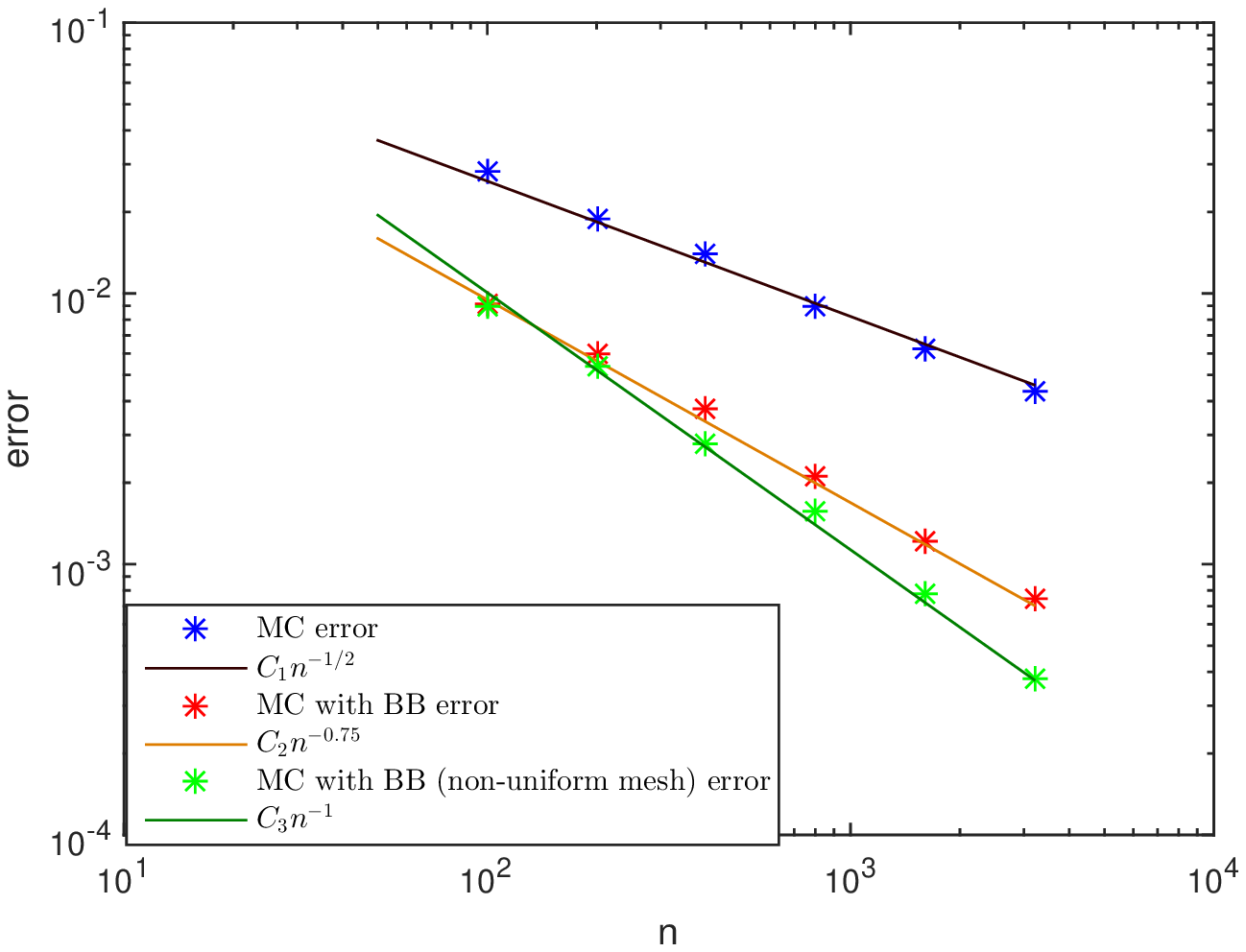}}
		\subfloat[]{\includegraphics[width=0.5\textwidth]{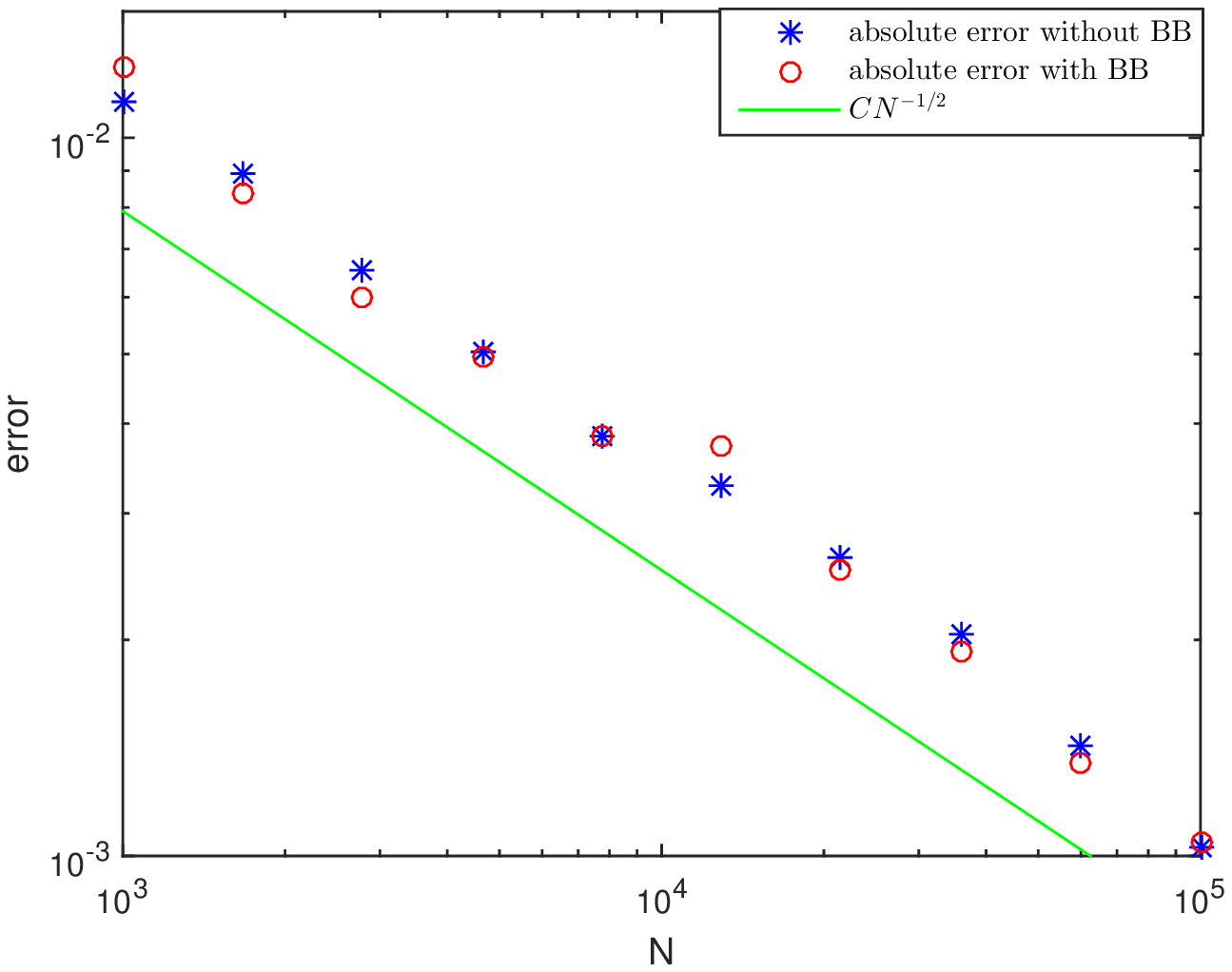}} \\
	\end{center}
	\vspace{-10pt}
	\caption{Error of the loss process at $t=T$ for $Y_0 \sim \gammadistr(3/2, 1/2)$: 
	(a) for increasing number $n$ of timesteps; 
	(b) for increasing number $N$ of samples, 
	both for Algorithms \ref{algo1} and \ref{algo2}.}
 	\label{loss_fig2}\label{err_fig2}
\end{figure}	
As in the previous example, we also investigate the convergence rate in $N$ empirically. 
These results also confirm $\frac{1}{2}$ convergence rate in $N$ for both Algorithms \ref{algo1} and \ref{algo2}.
In Figure \ref{err_fig2_alpha} we present the dependence of $L_t$ and $L'_t$ on the parameter $\alpha$. As in the previous example, we use $N = 5 \times 10^{7}$ and $n = 200$.
\begin{figure}[h]
	\begin{center}
		\subfloat[]{\includegraphics[width=0.5\textwidth]{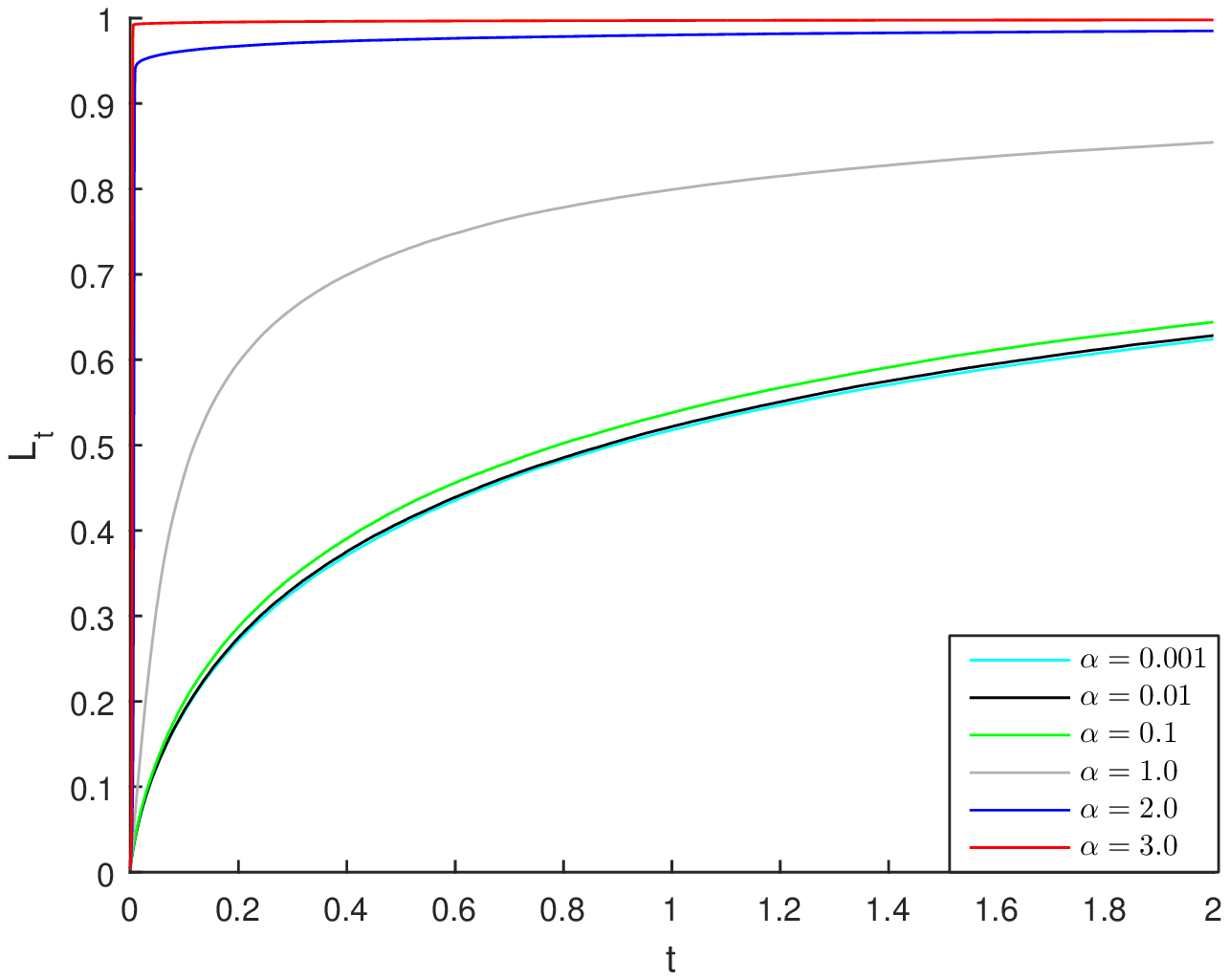}}
		\subfloat[]{\includegraphics[width=0.5\textwidth]{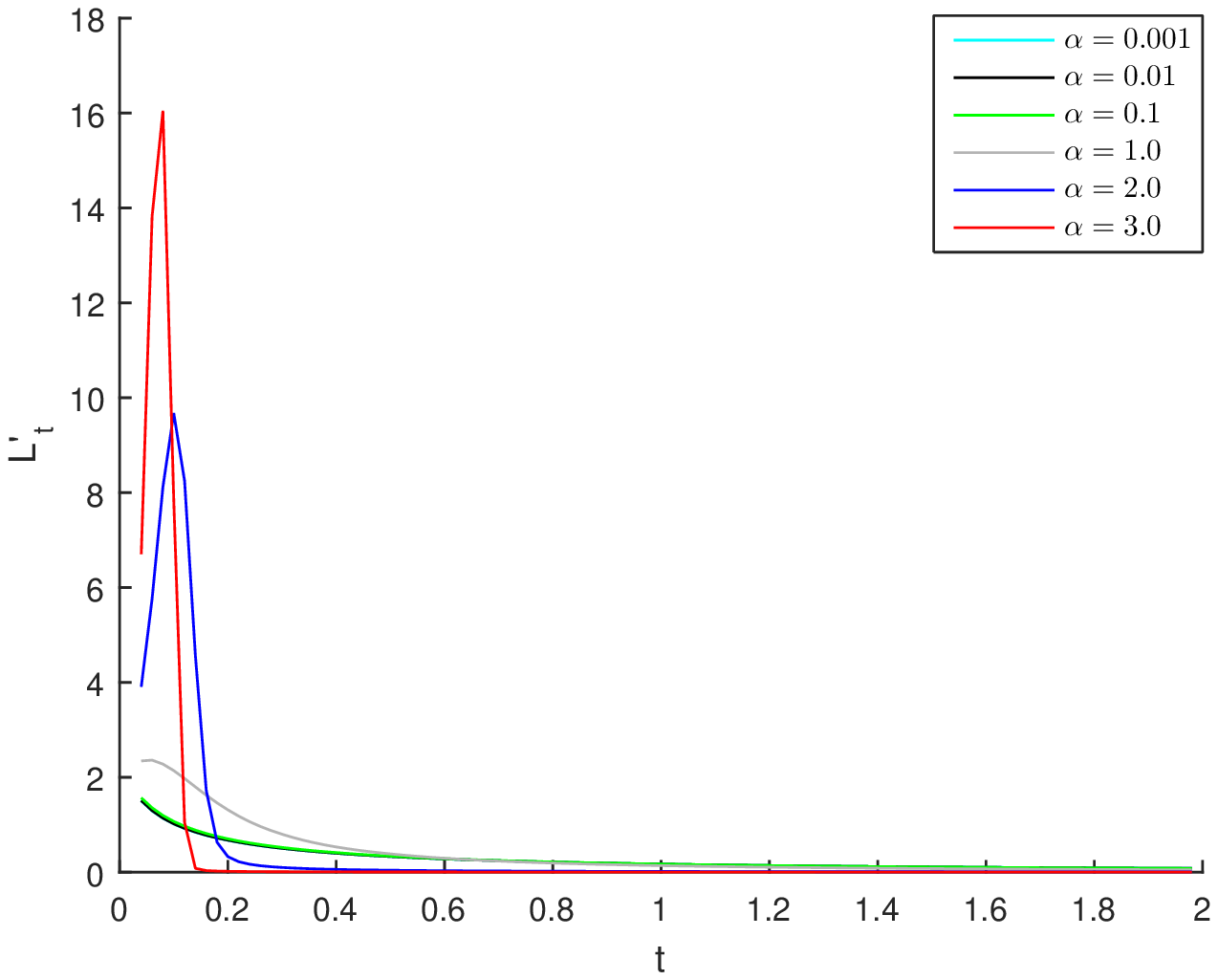}}\\
	\end{center}
	\vspace{-10pt}
	\caption{$L_t$ and $L'_t$ for different values of $\alpha$.}
 	\label{err_fig2_alpha}
\end{figure}

\subsection{H\"older $1/2$ initial data and blow-up}
\label{subsec:blowup}

In our third example, we illustrate possible jumps arising in the solution for sufficiently large values of $\alpha$. 
We consider $\alpha = 1.5$, $T = 0.008$ and choose $Y_0$ as in the previous example, $Y_0 \sim \gammadistr(1 + \beta, 1/2)$.
Note that the blow-up happens already for very small $t$ due to the interplay of the mass close to 0 for $Y_0$ and the relatively large $\alpha$.

With the lack of convergence theory for discontinuous $(L_t)_{t\ge 0}$, we apply Algorithms \ref{algo1} and \ref{algo2}, and empirically estimate the error.
In Figure \ref{jump_plot} (a) we show $\tilde{L}_t$ computed using Algorithm \ref{algo1} for different $n$;
in Figure  \ref{jump_plot} (b) the numerical error as a function of $t$ for specific $n$;
and in Figure \ref{jump_err} (a) and (b) we estimate the convergence rate for different $t$.

Figure \ref{jump_plot} (a) shows that a fairly fine resolution is needed to capture the discontinuity and its timing, but that all meshes predict the size of the jump well.
This is further illustrated in Figure \ref{jump_plot} (b), which shows the build-up of the error before the jump, the lack of uniform convergence due to the displacement
of the jump on different meshes, and the relatively constant error after the jump.

In Figure \ref{jump_err} (a) we estimate the convergence order at $T = 0.002$, i.e.\ before the jump. 
By regression, we get $0.53\ (0.47, 0.59)$  for Algorithm \ref{algo1} and $0.80\ (0.72, 0.88)$ for Algorithm \ref{algo2}, where the 95\% confidence interval is in brackets, which agrees with the theory for continuous $L_t$.
In Figure \ref{jump_err} (b), for  the error  at $T = 0.008$, i.e.\ after the jump, we get $0.93 \ (0.81, 1.05)$  for Algorithm \ref{algo1} and $1.03 \ (0.92, 1.14)$ for Algorithm \ref{algo2}. 
The faster convergence may result from the almost constancy of the losses after the jump.
\begin{figure}[h]
	\begin{center}
		\subfloat[]{\includegraphics[width=0.55\textwidth]{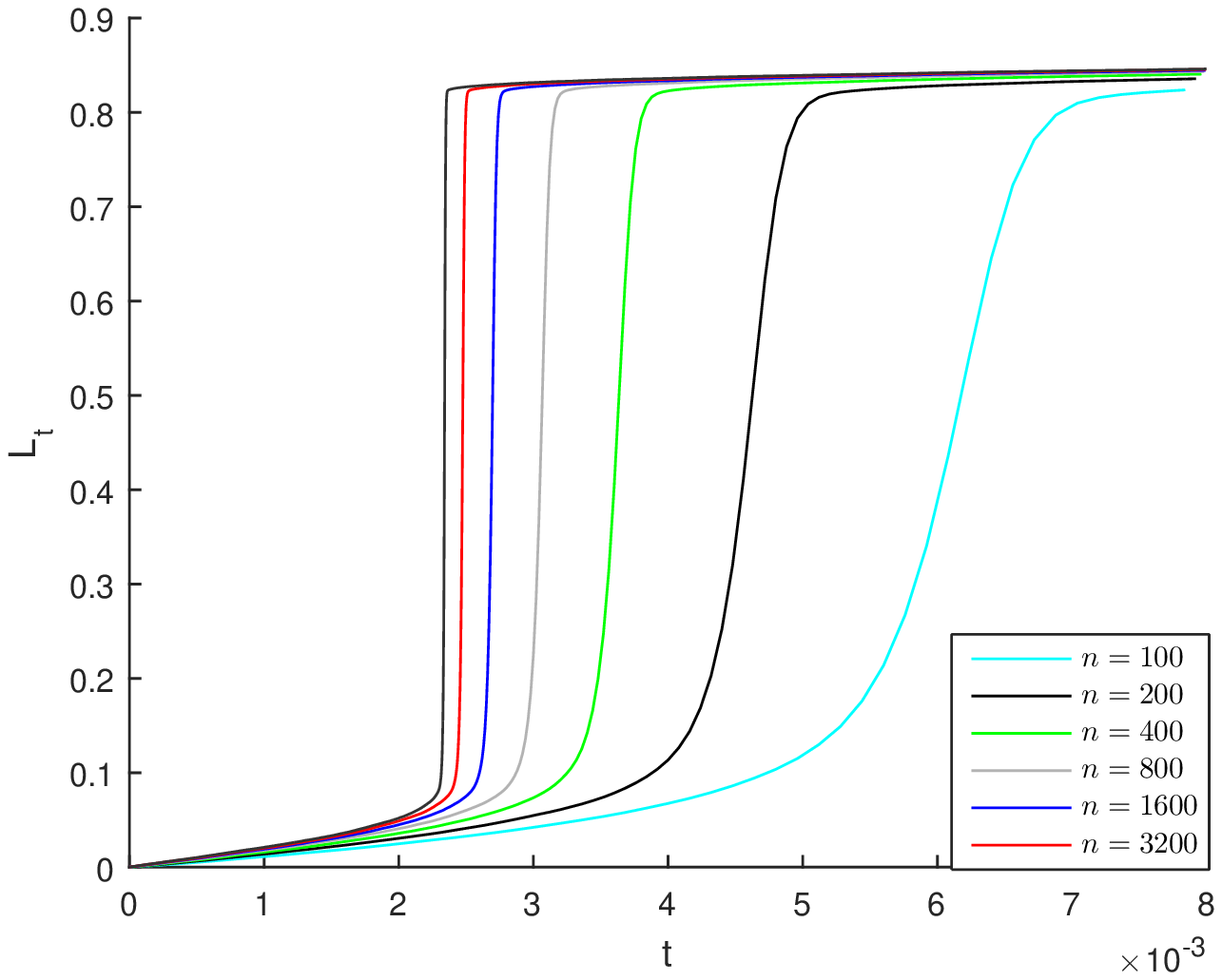}}
		\subfloat[]{\includegraphics[width=0.55\textwidth]{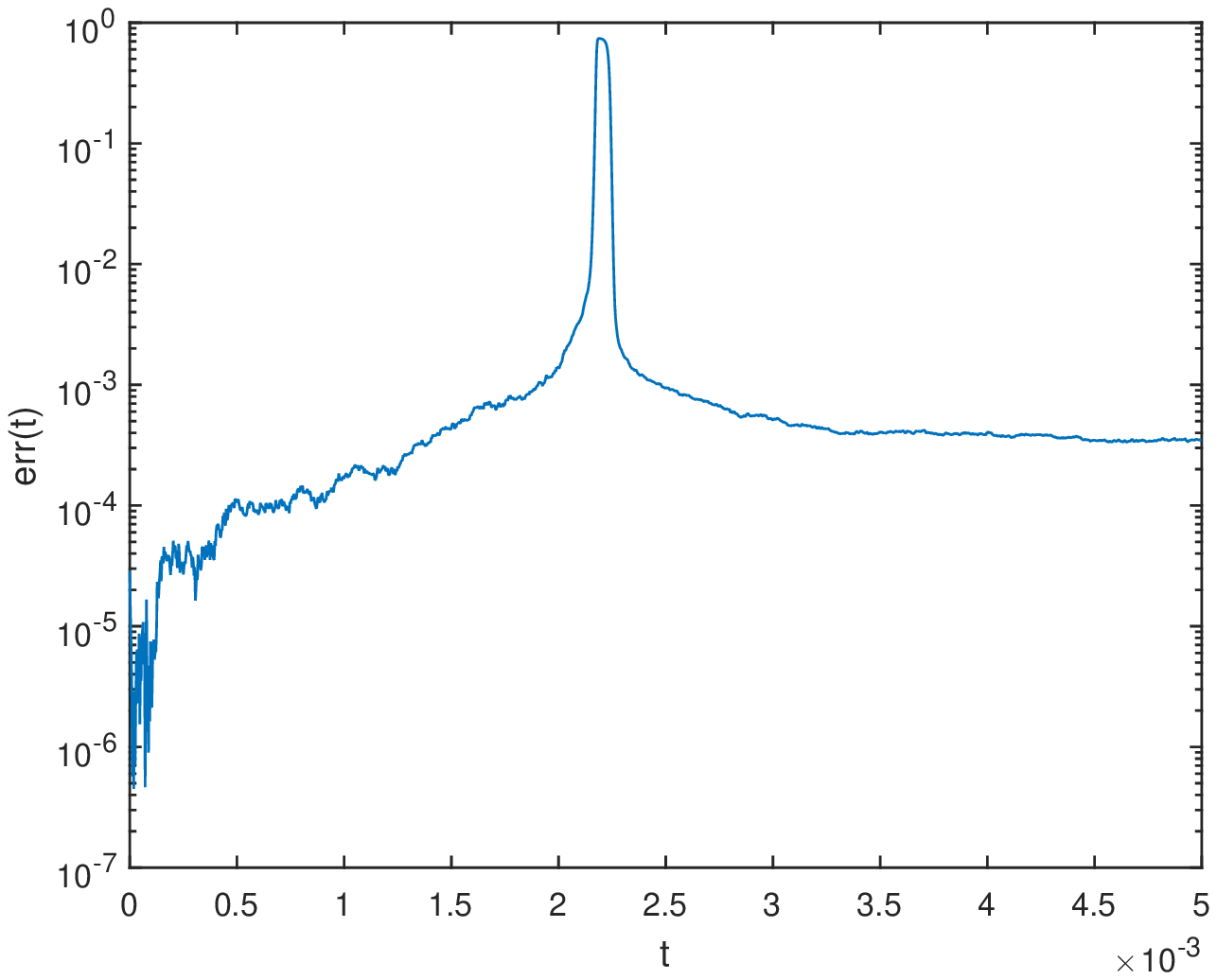}}
	\end{center}
	\vspace{-10pt}
	\caption{(a) Loss process computed using Algorithm \ref{algo1} for different $n$, (b) error as a function of $t$.}
 	\label{jump_plot}
\end{figure}

\begin{figure}[h]
	\begin{center}
		\subfloat[]{\includegraphics[width=0.55\textwidth]{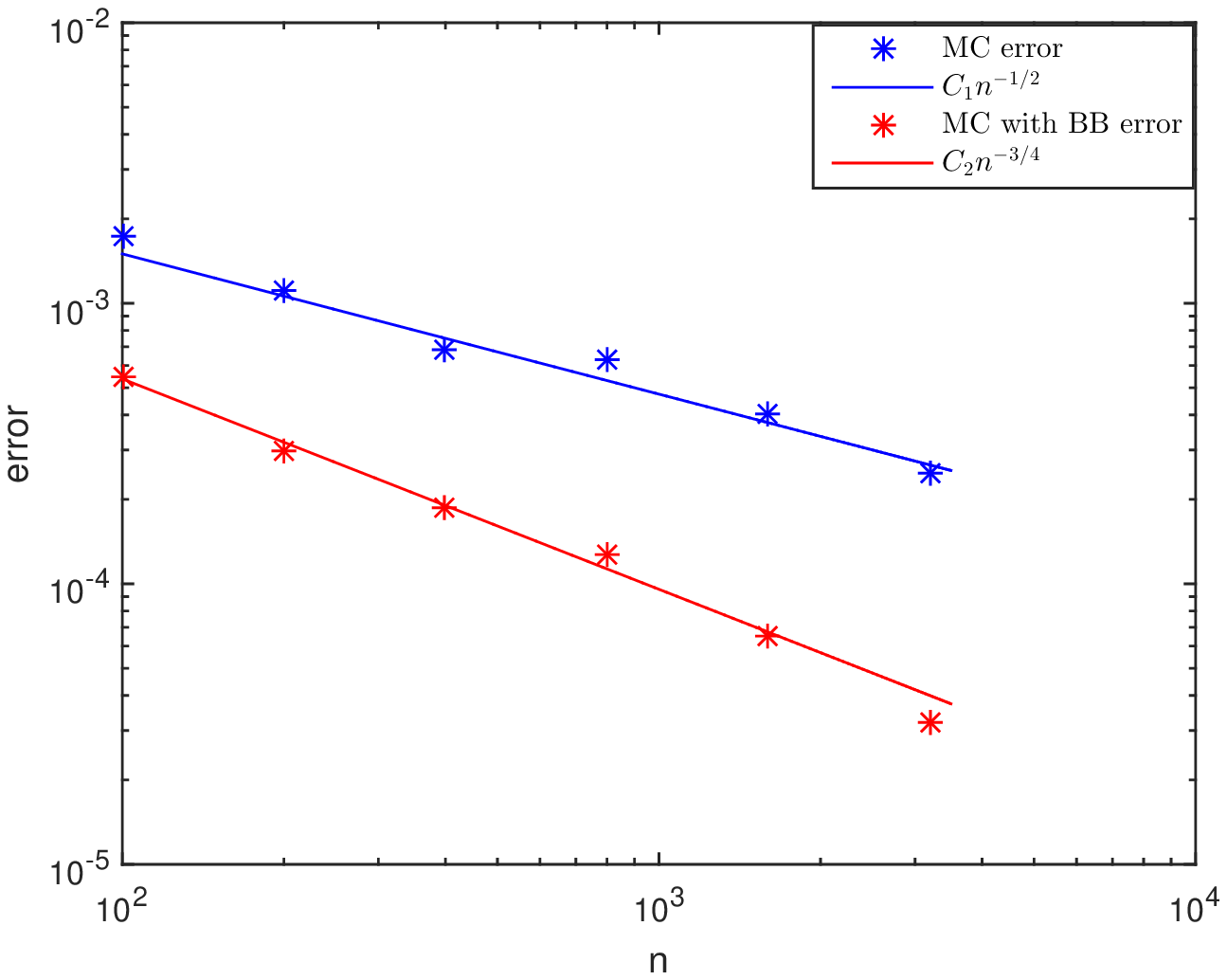}}
		\subfloat[]{\includegraphics[width=0.55\textwidth]{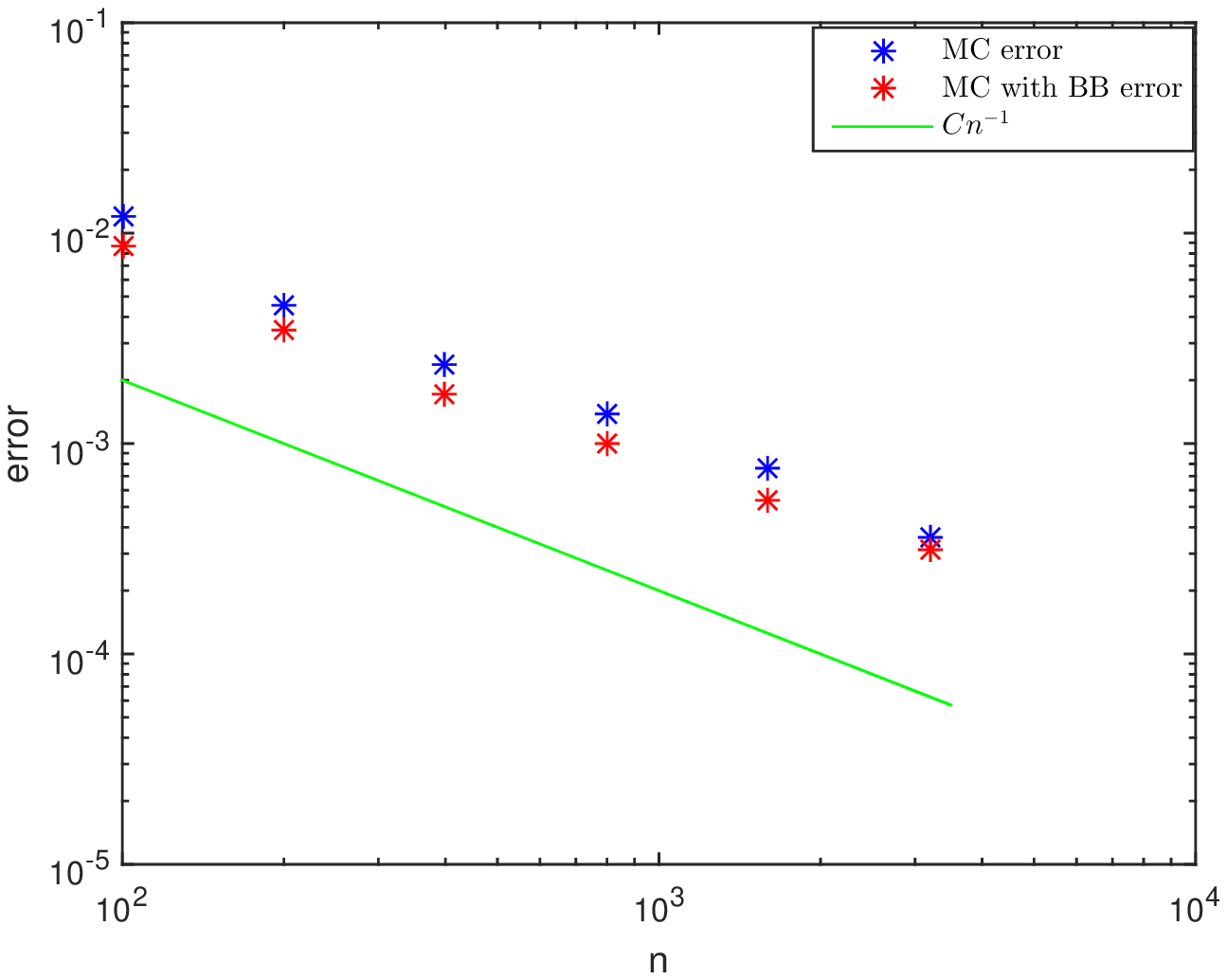}}\\
	\end{center}
	\vspace{-10pt}
	\caption{Convergence rate: at (a) $T = 0.001$, (b) $T = 0.008$.}
 	\label{jump_err}
\end{figure}

	
	To get an overall picture of the accuracy,
	we suggest the following metrics to measure the ``closeness'' of the computed solutions to $L_t$:
	\begin{enumerate}
		\item $d_1(L_t, \tilde{L}_t) = \int_0^T |L_t - \tilde{L}_t | \, d t$. This is practically computed by numerical integration.\\
		\item $d_2(L_t, \tilde{L}_t) = | t_{*} - \tilde{t}_{*} |$, where $t_{*}$ and $\tilde{t}_{*}$ are the jump times for $L_t$ and $\tilde{L}_t$, respectively.  They are approximated by the points with the steepest gradient $j_{*} = \argmax_{j} (\tilde{L}_{t_j}-\tilde{L}_{t_{j-1}}), t_* = j_{*} h$.\\
		\item $d_3(L_t, \tilde{L}_t) = \sup_{t \in [0, T]} |L^{-1}_t - \tilde{L}^{-1}_t|$, where $L^{-1}_t$ and $\tilde{L}^{-1}_t$ are the corresponding inverse functions. The inverse functions are found by the chebfun toolbox (\cite{Trefethen}), which automatically splits functions into intervals of continuity.\\
	\end{enumerate}
	In the absence of the exact $L_t$, we use again the difference between quantities 
	computed using $2n$ and $n$ points, for the same Monte Carlo paths, as a proxy.

We present the results in Figure \ref{jump_err2} (a) for Algorithm \ref{algo1} and in Figure \ref{jump_err2} (b) for Algorithm \ref{algo2}. For metric 1 we have convergence rate
$0.68 \ (0.63, 0.73)$ and $0.76 \ (0.71, 0.81)$, for metric 2 we have $0.70 \ (0.65, 0.80)$ and $0.76 \ (0.72, 0.80)$, and for metric 3 we have $0.60 \ (0.52, 0.68)$ and $0.71 \ (0.63, 0.79)$  for Algorithms \ref{algo1} and \ref{algo2}, respectively, where the 95\% confidence interval are in brackets. 
We observe that the convergence rate for Algorithm \ref{algo1} is somewhat better than $0.5$, while for Algorithm \ref{algo2} it is around $0.75$ for all three metrics.

\begin{figure}[H]
	\begin{center}
		\subfloat[]{\includegraphics[width=0.55\textwidth]{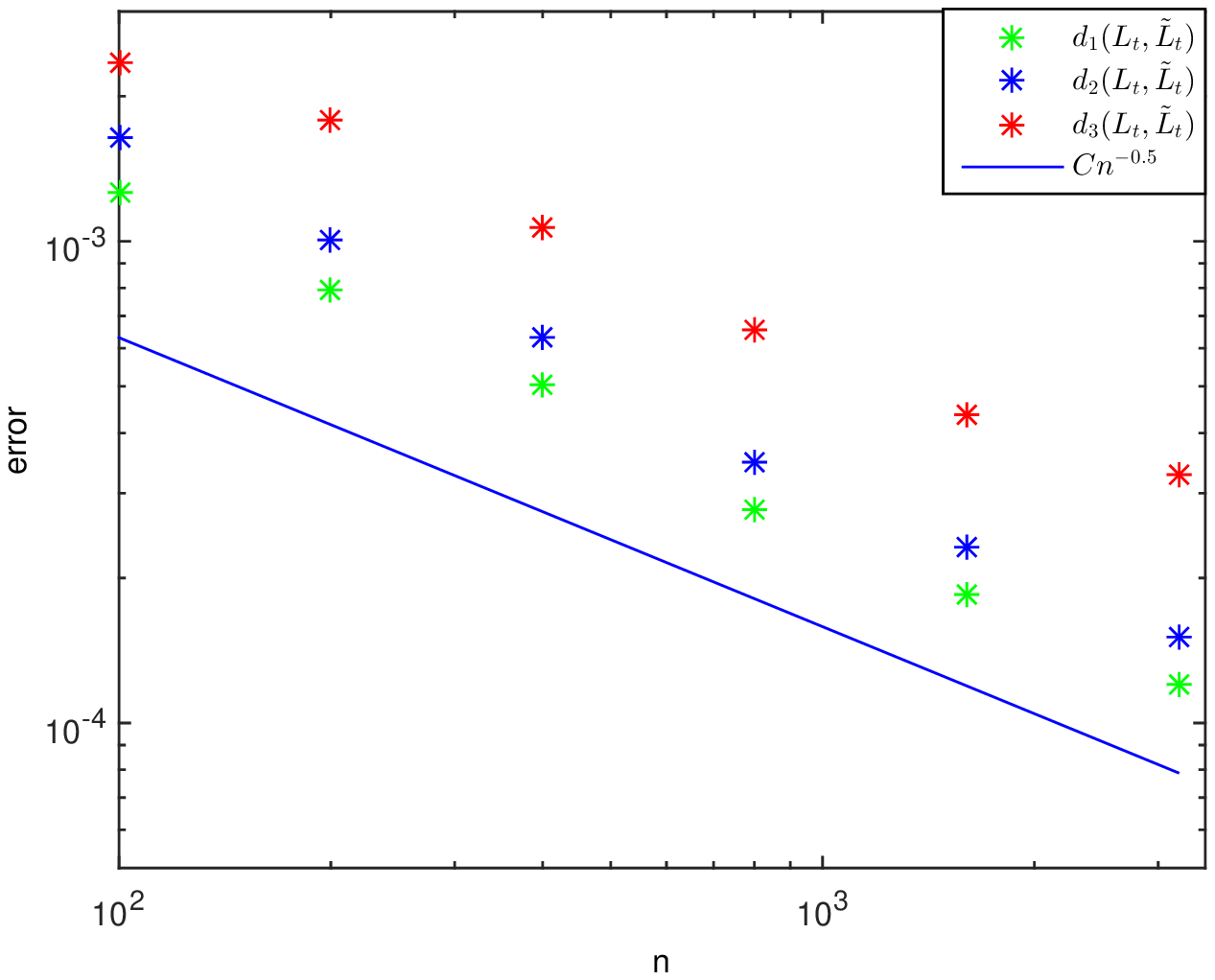}}
		\subfloat[]{\includegraphics[width=0.55\textwidth]{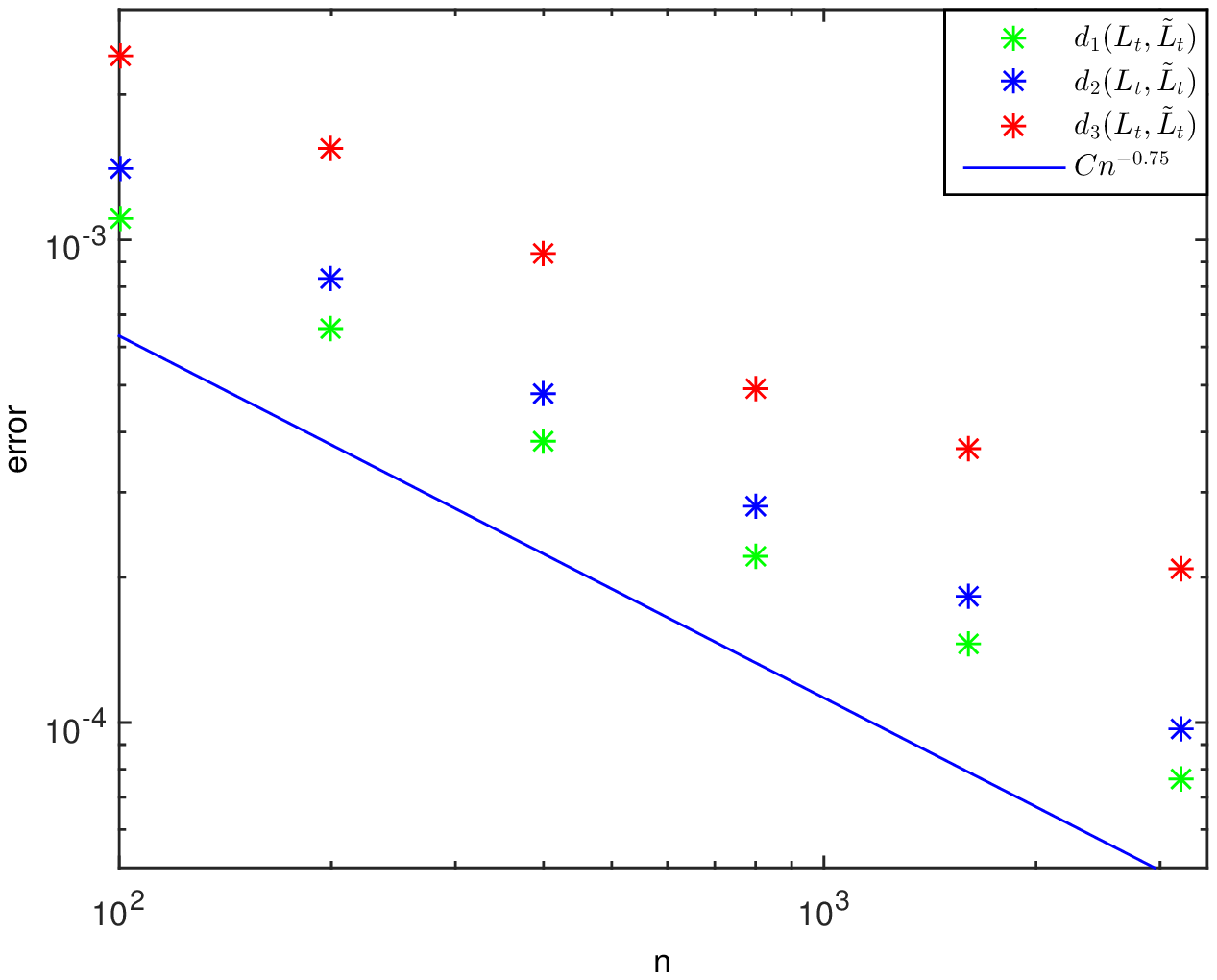}}\\
	\end{center}
	\vspace{-10pt}
	\caption{Convergence rate for $d_1(L_t, \tilde{L}_t)$, $d_2(L_t, \tilde{L}_t)$, $d_3(L_t, \tilde{L}_t)$ for  (a) Algorithm \ref{algo1}, (b) Algorithm \ref{algo2}.}
 	\label{jump_err2}
\end{figure}

\section{Conclusions}
\label{sec:concl}

We have developed particle methods with explicit timestepping for the simulation of \eqref{mckean-vlasov1}-\eqref{mckean-vlasov2}.
Convergence with a rate up to 1 in the timestep is shown under a condition on the model parameters and time horizon, when the loss function is differentiable.
Experimentally, the method also converges in the blow-up regime, and the variance of estimators is inversely proportional to the number of samples.


This opens up several theoretical and practical questions. The efficiency of the method could be significantly improved by a simple application of multilevel simulation (\cite{szpruch2017iterative, ricketson2015multilevel}).
If the variance can be shown to behave on the finer levels as suggested by the numerical tests, combined with the proven result on the time stepping bias,
the computational complexity for root-mean-square error $\epsilon$ would be brought down to $\epsilon^{-2}$, from $\epsilon^{-3}$ as observed presently ($\epsilon^{-2}$ from the number of samples and $\epsilon^{-1}$ from the number of timesteps).

For the particular system \eqref{mckean-vlasov1}--\eqref{mckean-vlasov2}, it is conceivable to apply a particle method without time stepping of the form
\begin{eqnarray*}
Y_t^{(i)} &=& Y_0^{(i)} + W_t^{(i)} - \alpha \frac{1}{N} \sum_{j=1}^N \mathbbm{1}_{\{\tau_{(j)} \le t\}},
\end{eqnarray*}
where $Y_0^{(i)}$ and $W^{(i)}$ are $N$ i.i.d.\ copies of $Y_0$ and $W$, and $\tau_{(j)}$, $1\le j\le N$, is the order statistic of the hitting times of zero. Then, for $\tau_{(i)} < t <  \tau_{(i+1)}$, conditional on the information up to $\tau_{(i)}$, the law of $Y_t^{(j)}-Y_{\tau_{(i)}}^{(j)}$ for those $N-i$ particles $j$ which have not yet hit, is identical to that of independent standard Brownian motions with constant drift $-\alpha i/N$. So if we simulate those independent hitting times of $-Y^{(j)}_{\tau_{(i)}}$ for each and then take the smallest one to be $\tau_{(i+1)}$, this inductive construction satisfies the correct law. The complexity is then $O(N^2)$, and combined with the observed variance of $O(1/N)$, this gives a complexity of $\epsilon^{-4}$ for error $\epsilon$.
Another disadvantage is that this method with exact sampling is restricted to the constant parameter case, while the time stepping scheme can easily be extended to variable coefficients.

Theoretically, one would like guaranteed convergence also in the blow-up regime. This requires the choice of an appropriate metric -- the Skorokhod distance may be suitable, considering the analysis in \cite{delarue2015particle} and the tests in Section \ref{subsec:blowup}.
It also requires verification that the limit in the case of blow-up is a so-called ``physical solution'' as defined in \cite{Andreas, Shkolnikov,delarue2015particle}, which on the one hand results from a specific sequential realisation of the losses in the case of blow-up (\cite{Shkolnikov,delarue2015particle}), and on the other hand is the right-continuous solution with the smallest jump size (\cite{Andreas}).

Lastly, it would be interesting to investigate the extension to the models in \cite{Shkolnikov} and \cite{delarue2015particle} in more detail.

\section*{Acknowledgements}
We thank Alex Lipton, Andreas Søjmark, and Sean Ledger for useful comments and interesting discussion. We assume full responsibility for any remaining mistakes.

\newpage
\begin{appendices}

\section{Proof of Lemmas in Section \ref{sec:conveul}}
\label{appendix_proofs}

\subsection{Proof of Lemma \ref{lemma_asmussen}}
\begin{proof}[Proof of Lemma \ref{lemma_asmussen}]
Writing
	\begin{multline*}
		\sup_{s \le h \lfloor \frac{t}{h} \rfloor} \left(Y_{s}  - Y^{*}_s \right) 
		= \max_{1 \le k \le \lfloor \frac{t}{h} \rfloor} \sup_{0 \le s \le h} \left( W_{h (k - 1)+s} - W_{h (k-1) } - \alpha (L_{h(k-1) + s} - L_{h(k-1) } ) \right) \\
		= \max_{1 \le k \le \lfloor \frac{t}{h} \rfloor} \sup_{0 \le s \le h} \left( W_{h (k - 1) + s} - W_{h (k-1)} - \alpha s L'_{h(k-1) + \theta s} \right)
	\end{multline*}
	for some $\theta \in [0,1]$, the last expression can be estimated from both sides,
	\begin{multline*}
		 \max_{1 \le k \le \lfloor \frac{t}{h} \rfloor}  \left\{\sup_{0 \le s \le h} \left( W_{h (k - 1) + s} - W_{h (k-1)}  \right)- \alpha  \sup_{0 \le s \le h}  h L'_{h(k-1) + \theta s} \right\}  \le \\
		 \max_{1 \le k \le \lfloor \frac{t}{h} \rfloor} \sup_{0 \le s \le h} \left( W_{h (k - 1)} - W_{h (k-1) + s} - \alpha s L'_{h(k-1) + \theta s} \right)  \\
		 \le \max_{1 \le k \le \lfloor \frac{t}{h} \rfloor}  \left\{\sup_{0 \le s \le h} \left( W_{h (k - 1)} - W_{h (k-1) + s}  \right)- \alpha  \inf_{0 \le s \le h}  s L'_{h(k-1) + \theta s} \right\}.
	\end{multline*}
	
	Since $0 \le L'_s \le \hat{B} s^{-\frac{1-\beta}{2}}$,
	\begin{multline*}
		\sup_{s \le h \lfloor \frac{t}{h} \rfloor} \left(Y_{s}  - Y^{*}_s \right) 
		\ge \max_{1 \le k \le \lfloor \frac{t}{h} \rfloor}  \left\{\sup_{0 \le s \le h} \left( W_{h (k - 1) + s} - W_{h (k-1) }  \right)- \alpha \hat{B} h (h(k-1) )^{-\frac{1-\beta}{2}} \right\} \\
		=_d \sqrt{h} \max_{1 \le k \le \lfloor \frac{t}{h} \rfloor}\left\{  \inf_{0 \le s \le 1} W_s^{(k)} -\alpha \hat{B} \sqrt{h} (h(k-1) )^{-\frac{1-\beta}{2}} \right\},
	\end{multline*}
	and
	\begin{multline*}
		\sup_{s \le h \lfloor \frac{t}{h} \rfloor} \left(Y_{s}  - Y^{*}_s \right) 
		\le \max_{1 \le k \le \lfloor \frac{t}{h} \rfloor}  \left\{\sup_{0 \le s \le h} \left( W_{h (k - 1) + s} - W_{h (k-1) }  \right)\right\} \\
		=_d \sqrt{h} \max_{1 \le k \le \lfloor \frac{t}{h} \rfloor}\left\{  \sup_{0 \le s \le 1} W_s^{(k)} \right\},
	\end{multline*}
	where $W^{(1)}, W^{(2)}, \ldots$ are i.i.d. copies of $W$. 
	
	Taking into account that $ \sqrt{h} (h(k-1) )^{-\frac{1-\beta}{2}} \to 0$, as $h \to 0$, and applying the same arguments as in Proposition 1 in \cite{asmussen1995discretization}, we get \eqref{asmussen_result}.
	
	Similar,
	\begin{multline*}
		\sup_{s \le h \lfloor \frac{t}{h} \rfloor} \left(\tilde{Y}_{s}  - \tilde{Y}^{*}_s \right) 
		= \max_{1 \le k \le \lfloor \frac{t}{h} \rfloor} \sup_{0 \le s \le h} \left( W_{h (k - 1) + s} - W_{h (k-1) }   \right) \\
		= \max_{1 \le k \le \lfloor \frac{t}{h} \rfloor} \sup_{0 \le s \le h} \left( W_{h (k - 1) + s} - W_{h (k-1) }  \right)
		=_d \sqrt{h} \max_{1 \le k \le \lfloor \frac{t}{h} \rfloor}\left\{  \sup_{0 \le s \le 1} W_s^{(k)}  \right\},
	\end{multline*}
	where $W^{(1)}, W^{(2)}, \ldots$ are i.i.d. copies of $W$. 
	
	Again, applying the same arguments as in Proposition 1 in \cite{asmussen1995discretization}, we get \eqref{asmussen_result2}.	
\end{proof}

\subsection{Proof of Lemma \ref{lemma_1_2_delta}}
\begin{proof}[Proof of Lemma \ref{lemma_1_2_delta}]
We first prove \eqref{eqn_1_2_delta}.
It is obvious that
\begin{equation*}
	\mathbb{P}\left(\min_{j < i} Y_{t_j} > 0\right) \ge \mathbb{P}\left(\inf_{s < t_i} Y_s > 0\right).
\end{equation*}
Also,
\begin{equation*}
	\mathbb{P}\left(\min_{j < i} Y_{t_j} > 0\right) = \mathbb{P}\left(\inf_{s < t_i}Y_s > 0\right) + \mathbb{P}\left(\left\{\inf_{s < t_i} Y_s \le 0 \right\} \cap \left\{ \min_{j < i}Y_{t_j} > 0 \right\} \right).
\end{equation*}
Thus, 
\begin{multline*}
	\mathbb{P}\left(\min_{j < i} Y_{t_j} > 0\right) - \mathbb{P}\left(\inf_{s < t_i} Y_s > 0\right) = \mathbb{P}\left(\left\{\inf_{s < t_i}Y_s  \le 0 \right\} \cap \left\{ \min_{j < i} Y_{t_j} > 0 \right\} \right)  \\
	\le \mathbb{P}\left( \min_{j < i} Y_{t_j} -  \inf_{s < t_i} Y_s \ge \varepsilon \right) +  \mathbb{P}\left( \min_{j < i} Y_{t_j} \in (0, \varepsilon),  \min_{j < i} Y_{t_j} -  \inf_{s < t_i} Y_s <  \varepsilon \right),
\end{multline*}
for any $\varepsilon > 0$.

Consider the first term. Using Markov's inequality
\begin{equation*}
	\mathbb{P}\left( \min_{j < i} Y_{t_j} -  \inf_{s < t_i} Y_s \ge \varepsilon \right) \le \frac{\mathbb{E}\left[ \left(  \min_{j < i} Y_{t_j} -  \inf_{s < t_i} Y_s \right) \right]}{\varepsilon^p},
\end{equation*}
for any $p \ge 1$. 

Using Lemma \ref{lemma_asmussen},
\begin{equation*}
	 \frac{1}{\sqrt{h}} \left(\min_{j < i} Y_{t_j}  - \inf_{s < t_i} Y_s \right)\rightarrow_{d}  \sqrt{2 \log{\frac{t_i}{h}}}
\end{equation*}
thus
\begin{equation*}
	\frac{\mathbb{E}\left[\left(   \min_{j < i} Y_{t_j} -  \inf_{s < t_i} Y_s \right)^p\right]}{\varepsilon^p \cdot h^{p/2}} \longrightarrow_{d} \frac{(2 \log{\frac{t_i}{h}})^{p/2}}{\varepsilon^p}.
\end{equation*}
For the second term
\begin{multline*}
\mathbb{P}\left( \min_{j < i} Y_{t_j} \in (0, \varepsilon),  \min_{j < i} Y_{t_j} -  \inf_{s < t_i}Y_s <  \varepsilon \right) \le  \mathbb{P}\left( \min_{j < i} {Y}_{t_j} \in (0, \varepsilon)\right)= \\
       \mathbb{P}\left( \min_{j < i} Y_{t_j} > 0 \right) - \mathbb{P}\left( \min_{j < i} Y_{t_j} > \varepsilon \right) = \bar{F}_i(\varepsilon) - \bar{F}_i(0) \le \varepsilon \sup_{\theta \in [0, 1]} \bar{\varphi}_i \left( \theta \varepsilon \right),
\end{multline*}
where $\bar{F}_i(x)$ and $\bar{\varphi}_i(x)$ are the CDF and PDF of the process $ \min_{j < i} Y_{t_j}$. We also note that $\bar{\varphi}_i \left( \theta \varepsilon \right)$ is bounded according to Lemma \ref{density_estimate}.

Combining both terms, we have
\begin{equation*}
\mathbb{P}\left(\min_{j < i}Y_{t_j} > 0\right) - \mathbb{P}\left(\inf_{s < t_i} Y_s > 0\right) \le A \frac{h^{p/2}}{\varepsilon^p} + B \varepsilon.
\end{equation*}
Minimising the right-hand side over $\varepsilon$, we find $\varepsilon=h^a$ with $a = \frac{p}{2(p+1)}$.
As a result, choosing $p \to \infty$, we get that for any $\delta > 0$, there exists $\gamma > 0$, such that
\begin{equation*}
\mathbb{P}\left(\min_{j < i} Y_{t_j} > 0\right) - \mathbb{P}\left(\inf_{s < t_i} Y_s > 0\right) \le \gamma h^{\frac{1}{2} - \delta}.
\end{equation*}


For \eqref{eqn_1_2_delta_tilde}, we use identical steps and the corresponding estimates in Lemma \ref{lemma_asmussen} and Lemma \ref{density_estimate} for $\tilde{Y}$
instead of $Y$.

\end{proof}

\subsection{Proof of Lemma \ref{density_estimate}}
\begin{proof}[Proof of Lemma \ref{density_estimate}]
We can rewrite  $Z_i = Y_0 + V_i$, where $ V_i = \inf_{s \le t_i} (W_{s} - \alpha L_{s})$. As $Y_0$ and $V_i$ are independent, using convolution, we have
\begin{equation*}
 \varphi_i(z) = \int_{-\infty}^{+\infty} f_{Y_0}(z-v) \mathbb{P}(V_i \in dv).
\end{equation*}

Using the fact that $V_i \le 0, Y_0 \ge 0,$ and  \eqref{Y_0_assumption}, we have
\begin{multline}
 \varphi_i(z) = \int_{-\infty}^{z \wedge 0} f_{Y_0}(z-v) \mathbb{P}(V_i \in dv) \le B  \int_{-\infty}^{z \wedge 0} (z-v)^{\beta} \mathbb{P}(V_i \in dv)\\
  =  B F_{V_i}(z \wedge 0) \int_{-\infty}^{z \wedge 0} (z-v)^{\beta} \frac{ \mathbb{P}(V_i \in dv)}{F_{V_i}(z \wedge 0)},
  	\label{lemma_phi_expectation_estimate}
\end{multline}
where $F_{V_i}$ is the CDF of $V_i$. It is obvious that $F_{V_i}(z) > 0$ for all $z  \le 0$. Indeed, $F_{V_i}(z) \ge  \mathbb{P}(V_i \le z) \ge \mathbb{P}(\inf_{s \le t_i} W_{s} \le z) > 0$.

Since $\beta \in (0, 1]$, for all $z$ the function $(z-\cdot)^{\beta}: (-\infty, z \wedge 0) \to (0, \infty)$ is concave and, using Jensen's inequality for the proper probability measure $\frac{\mathbb{P}(V_i \in dv)}{F_{V_i}(z \wedge 0)}$ on $(-\infty, z \wedge 0)$,
\begin{multline}
	\label{varphi_intermediate_est}
	\varphi_i(z) \le B F_{V_i}(z \wedge 0)^{1 - \beta} \left( \int_{-\infty}^{z \wedge 0} (z-v)\mathbb{P}(V_i \in dv) \right)^{\beta} \\
	= B F_{V_i}(z \wedge 0)^{1 - \beta}  \left( \mathbb{E} \left[ (z - V_i) \mathbbm{1}_{\{V_i \le z\}} \right] \right)^{\beta},
\end{multline}
where $(V_i\le z) \Leftrightarrow (V_i\le z \wedge 0)$ as $V_i \le 0$.
Since $t\rightarrow L_t$ is non-decreasing, inserting $V_i$, 
\begin{equation}
	\begin{aligned}
		\label{varphi_ineq}
		\varphi_i(z)  
		&\le B F_{V_i}(z \wedge 0)^{1 - \beta} (\mathbb{E}[(z - \inf_{s \le t_i} \{W_{s} \} + \alpha L_{t_i}) \mathbbm{1}_{\{\inf_{s \le t_i} \{W_{s} \} - \alpha L_{t_i} \le z\}}])^{\beta} \\
		&\le  B F_{V_i}(z \wedge 0)^{1 - \beta} \left(\mathbb{E}[(z - \inf_{s \le t_i} \{W_{s} \} + \alpha \tilde{B} t_i^{\frac{1+\beta}{2}}) \mathbbm{1}_{\big\{\inf_{s \le t_i} \{W_{s} \}  \le z +  \alpha \tilde{B} t_i^{\frac{1+\beta}{2}}\big\}}]\right)^{\beta}.
	\end{aligned}
\end{equation}
By simple integration of the density of the running minimum of Brownian motion, 
\begin{equation}
	\mathbb{E}[(\xi - \inf_{s \le t_i} W_s) \mathbbm{1}_{\{\inf_{s \le t_i} W_s \le \xi\}} ] = \left(2 \Phi \left( \frac{\xi}{\sqrt{t_i}}\right) \wedge 1 \right) \xi + \sqrt{\frac{2 t_i}{\pi}} e^{-\frac{\xi^2}{2t_i}},
\end{equation}
and,  taking $\xi = z + \alpha \tilde{B} t_i^{\frac{1+\beta}{2}}$, we get from \eqref{varphi_ineq}
for $z > 0$,
\begin{equation*}
	\varphi_i(z) \le B \left[z +  \alpha \tilde{B} t_i^{\frac{1 + \beta}{2}} +  \sqrt{\frac{2 t_i}{\pi}} \right]^{\beta},
\end{equation*}
and, similarly, for $z \le 0$,
\begin{equation*}
	\varphi_i(z) \le B \left[ \alpha \tilde{B} t_i^{\frac{1 + \beta}{2}} +  \sqrt{\frac{2 t_i}{\pi}} \right]^{\beta}.
\end{equation*}
Combing the last two equations, we finally get \eqref{varphi_estimate} for $\varphi_i$.

Using similar arguments for $\bar{Z}_i$ and $\tilde{Z}_i$, 
using $\inf_{s\le t_i} W_s \le \min_{j<i} W_{t_j}$ and $L_{t_i} \ge \tilde{L}_{t_i}$, respectively,
we get the analogous estimates for $\bar{\varphi}_i$ and $\tilde{\varphi}_i$, and hence \eqref{varphi_estimate}.
\end{proof}

\section{Proof of Lemma \ref{lemma_mc}, Section \ref{sec:MC}} 
\label{lemma_mc_proof}

\begin{proof}[Proof of Lemma \ref{lemma_mc}]
	We can write
	\begin{equation*}
		\mathbb{E}[\mathbbm{1}_{\{\min_{j < i} \hat{Y}_{t_j}^{(k)} > 0\}}] = \mathbb{E}[\mathbbm{1}_{\{\min_{j < i} \{Y_0^{(k)} + W_{t_j}^{(k)} - \alpha \tilde{L}_{t_j} + \alpha(\tilde{L}_{t_j} - \hat{L}_{t_j}^N)\} > 0 \}}],
	\end{equation*}
	and estimate
	\begin{equation}
		\label{lemma_estimate}
		 \mathbb{E}[\mathbbm{1}_{\{\min_{j < i} \tilde{Y}_{t_j}^{(k)}\ > -\alpha\min_{j < i}(\tilde{L}_{t_j} - \hat{L}_{t_j}^N)\}}]  \le \mathbb{E}[\mathbbm{1}_{\{\min_{j < i} \hat{Y}_{t_j}^{(k)} > 0\}}] \le \mathbb{E}[\mathbbm{1}_{\{\min_{j < i} \tilde{Y}_{t_j}^{(k)} > -\alpha\max_{j < i} (\tilde{L}_{t_j} - \hat{L}_{t_j}^N)\}}].
	\end{equation}
	We evaluate left- and right-hand side of the last equation separately. We start with the right-hand side,
	and define for brevity $\tilde{A}^{(k)}_i = \{ \min_{j < i} \tilde{Y}_{t_j}^{(k)} >  -\alpha \max_{j < i} (\tilde{L}_{t_j} - \hat{L}_{t_j}^N) \}$. Consider some $\varepsilon > 0$.Then,
	\begin{multline*}
		\mathbb{E}[\mathbbm{1}_{\tilde{A}^{(k)}_j }] = \mathbb{E}[\mathbbm{1}_{\tilde{A}^{(k)}_j  \cap ([\max_{j < i} (\tilde{L}_{t_j} - \hat{L}_{t_j}^N) \} \le \varepsilon] \cup [\max_{j < i}  (\tilde{L}_{t_j} - \hat{L}_{t_j}^N) \} > \varepsilon] )} ] = \\
		 \mathbb{E}[\mathbbm{1}_{\tilde{A}^{(k)}_j  \cap (\max_{j < i} (\tilde{L}_{t_j} - \hat{L}_{t_j}^N)  \le \varepsilon) } ] + \mathbb{E}[\mathbbm{1}_{\tilde{A}^{(k)}_j  \cap (\max_{j < i} (\tilde{L}_{t_j} - \hat{L}_{t_j}^N)  > \varepsilon)  }] \le \\ 
		 \mathbb{P} \left( \tilde{A}^{(k)}_j  \cap (\max_{j < i} (\tilde{L}_{t_j} - \hat{L}_{t_j}^N)  \le \varepsilon)\right) + \mathbb{P} \left( \max_{j < i} (\tilde{L}_{t_j} - \hat{L}_{t_j}^N)  > \varepsilon \right) \le \\
		 \mathbb{P} \left(  \min_{j < i} \tilde{Y}_{t_j}^{(k)} >  -\alpha  \varepsilon \right) + \mathbb{P} \left( \max_{j < i} (\tilde{L}_{t_j} - \hat{L}_{t_j}^N)  > \varepsilon \right) \le \\
		 1-\tilde{L}_{t_i} + \alpha  \varepsilon  \sup_{\theta \in [0, 1]}  \tilde{\varphi}_i \left( -\theta \alpha \varepsilon \right)+ \mathbb{P} \left( \max_{j < i} (\tilde{L}_{t_j} - \hat{L}_{t_j}^N) \} > \varepsilon \right),
	\end{multline*}
	where  $\tilde{\varphi}_i(x)$  is the pdf of $\min_{j < i} \tilde{Y}_{t_j}^{(k)}$, which is bounded according to Lemma \ref{density_estimate}.
	
	Using \eqref{lemma_mc_assumption} and properties of convergence in probability, we have
	\begin{equation*}
		\max_{j < i}(\tilde{L}_{t_j} - \hat{L}_{t_j}^N) \xrightarrow[]{\mathbb{P}} 0.
	\end{equation*}
	Thus, the last term goes to $0$ with $N \to \infty$. Considering $\varepsilon \to 0$, we get 
	\begin{equation}
		\label{hat_L_bound_right}
		\lim_{N \to \infty} \mathbb{E}[\mathbbm{1}_{\tilde{A}^{(k)}_j }] \le1 - \tilde{L}_{t_i}.
	\end{equation}
	Similar, denote by  $\bar{A}^{(k)}_i = \{ \min_{j < i} \tilde{Y}_{t_j}^{(k)} >  -\alpha \min_{j < i} (\tilde{L}_{t_j} - \hat{L}_{t_j}^N) \}$, then
	\begin{multline*}
		\mathbb{E}[\mathbbm{1}_{\bar{A}^{(k)}_j }] = \mathbb{E}[\mathbbm{1}_{\tilde{A}^{(k)}_j  \cap ([\min_{j < i} (\tilde{L}_{t_j} - \hat{L}_{t_j}^N) \} \ge -\varepsilon] \cup [\min_{j < i}  (\tilde{L}_{t_j} - \hat{L}_{t_j}^N) \} < -\varepsilon] )} ] = \\
		 \mathbb{E}[\mathbbm{1}_{\bar{A}^{(k)}_j  \cap (\min_{j < i} (\tilde{L}_{t_j} - \hat{L}_{t_j}^N)  \ge -\varepsilon) } ] + \mathbb{E}[\mathbbm{1}_{\bar{A}^{(k)}_j  \cap (\min_{j < i} (\tilde{L}_{t_j} - \hat{L}_{t_j}^N)  < -\varepsilon)  }] \ge \\ 
		 \mathbb{P} \left( \bar{A}^{(k)}_j  \cap (\min_{j < i} (\tilde{L}_{t_j} - \hat{L}_{t_j}^N)  \ge -\varepsilon)\right)  \ge \\
		 \mathbb{P} \left(  \min_{j < i} \tilde{Y}_{t_j}^{(k)} >  \alpha  \varepsilon \right) \ge 1-\tilde{L}_{t_i} -  \alpha  \varepsilon   \inf_{\theta \in [0, 1]} \tilde{\varphi}_i \left( \theta \alpha \varepsilon \right),
	\end{multline*}
	where $\tilde{\varphi}_i(x)$  is the pdf of $\min_{j < i} \tilde{Y}_{t_j}^{(k)}$, which is bounded according to Lemma \ref{density_estimate}. Thus,
	\begin{equation}
		\label{hat_L_bound_left}
		\lim_{N \to \infty} \mathbb{E}[\mathbbm{1}_{\tilde{A}^{(k)}_j }] \ge1 - \tilde{L}_{t_i}.
	\end{equation}
	Combining \eqref{hat_L_bound_right}, \eqref{hat_L_bound_left},  and \eqref{lemma_estimate},  we immediately get \eqref{lemma_convergence_eq}.

	Consider now the variance
	\begin{equation}
		\label{theor_mc_var}
		\mathbb{V}[\hat{L}_{t_i}^N] = \frac{1}{N^2} \left( \sum_{k= 1}^N \mathbb{V}[\mathbbm{1}_{\{ \min_{j < i} \hat{Y}_{t_j}^{(k)}  > 0 \}}] + \sum_{k \ne l} \cov \left(\mathbbm{1}_{\{ \min_{j < i} \hat{Y}_{t_j}^{(k)}  > 0 \}}, \mathbbm{1}_{\{ \min_{j < i} \hat{Y}_{t_j}^{(l)}  > 0 \}}\right)\right).
	\end{equation}
	The covariance can be estimated by
	\begin{multline}
		\label{theor_mc_cov_estimate}
		 \cov \left(\mathbbm{1}_{\{ \min_{j < i} \hat{Y}_{t_j}^{(k)}  > 0 \}}, \mathbbm{1}_{\{ \min_{j < i} \hat{Y}_{t_j}^{(l)}  > 0 \}}\right) = \\ 
		 \mathbb{E}\left[\mathbbm{1}_{\{ \min_{j < i} \hat{Y}_{t_j}^{(k)}  > 0 \}} \mathbbm{1}_{\{ \min_{j < i} \hat{Y}_{t_j}^{(l)}  > 0 \}} \right]  -\mathbb{E} \left[\mathbbm{1}_{\{ \min_{j < i} \hat{Y}_{t_j}^{(k)}  > 0 \}} \right] \mathbb{E} \left[ \mathbbm{1}_{\{ \min_{j < i} \hat{Y}_{t_j}^{(l)}  > 0 \}} \right].	 	 
	\end{multline}
	As above and because of exchangeability, for the second term of the last expression,
	\begin{equation}
		\label{cov_second}
		\mathbb{E} \left[\mathbbm{1}_{\{ \min_{j < i} \hat{Y}_{t_j}^{(k)}  > 0 \}} \right] \mathbb{E} \left[ \mathbbm{1}_{\{ \min_{j < i} \hat{Y}_{t_j}^{(l)}  > 0 \}} \right]  \xrightarrow[N \to \infty]{}  
		\left( 1 - \tilde{L}_{t_i} \right)^2.
	\end{equation}
	Now we consider the first term of \eqref{theor_mc_cov_estimate}. Similar to above, it can be estimated by
		\begin{multline}
		\label{lemma_estimate}
		 \mathbb{E}[\mathbbm{1}_{\{\min_{j < i} \tilde{Y}_{t_j}^{(k)}\ > -\alpha\min_{j < i}(\tilde{L}_{t_j} - \hat{L}_{t_j}^N)\}} \mathbbm{1}_{\{\min_{j < i} \tilde{Y}_{t_j}^{(l)}\ > -\alpha\min_{j < i}(\tilde{L}_{t_j} - \hat{L}_{t_j}^N)\}} ]  \le \\
		 \mathbb{E}\left[\mathbbm{1}_{\{ \min_{j < i} \hat{Y}_{t_j}^{(k)}  > 0 \}} \mathbbm{1}_{\{ \min_{j < i} \hat{Y}_{t_j}^{(l)}  > 0 \}} \right]    \\ 
		\le  \mathbb{E}[\mathbbm{1}_{\{\min_{j < i} \tilde{Y}_{t_j}^{(k)}\ > -\alpha\max_{j < i}(\tilde{L}_{t_j} - \hat{L}_{t_j}^N)\}} \mathbbm{1}_{\{\min_{j < i} \tilde{Y}_{t_j}^{(l)}\ > -\alpha\max_{j < i}(\tilde{L}_{t_j} - \hat{L}_{t_j}^N)\}} ] .
	\end{multline}
	Similar to  \eqref{hat_L_bound_right} and \eqref{hat_L_bound_left}, one can show that 
	\begin{align*}
	& \lim_{N \to \infty} \mathbb{E}[\mathbbm{1}_{\{\min_{j < i} \tilde{Y}_{t_j}^{(k)}\ > -\alpha\min_{j < i}(\tilde{L}_{t_j} - \hat{L}_{t_j}^N)\}} \mathbbm{1}_{\{\min_{j < i} \tilde{Y}_{t_j}^{(l)}\ > -\alpha\min_{j < i}(\tilde{L}_{t_j} - \hat{L}_{t_j}^N)\}} ]   \ge (1 - \tilde{L}_{t_i})^2, \\	&\lim_{N \to \infty} \mathbb{E}[\mathbbm{1}_{\{\min_{j < i} \tilde{Y}_{t_j}^{(k)}\ > -\alpha\max_{j < i}(\tilde{L}_{t_j} - \hat{L}_{t_j}^N)\}} \mathbbm{1}_{\{\min_{j < i} \tilde{Y}_{t_j}^{(l)}\ > -\alpha\max_{j < i}(\tilde{L}_{t_j} - \hat{L}_{t_j}^N)\}} ]   \le (1 - \tilde{L}_{t_i})^2. 
	\end{align*}
	Hence,
	\begin{equation}
		\label{cov_first}
		 \mathbb{E}\left[\mathbbm{1}_{\{ \min_{j < i} \hat{Y}_{t_j}^{(k)}  > 0 \}} \mathbbm{1}_{\{ \min_{j < i} \hat{Y}_{t_j}^{(l)}  > 0 \}} \right]  \xrightarrow[N \to \infty]{} (1 - \tilde{L}_{t_i})^2.
	\end{equation}
	Thus, combining \eqref{cov_second} and \eqref{cov_first}, we get
	\begin{equation*}
		 \cov \left(\mathbbm{1}_{\{ \min_{j < i} \hat{Y}_{t_j}^{(k)}  > 0 \}}, \mathbbm{1}_{\{ \min_{j < i} \hat{Y}_{t_j}^{(l)}  > 0 \}}\right) \xrightarrow[N \to \infty]{}  0.
	\end{equation*}
	Considering the first term of \eqref{theor_mc_var}, it is easy to see that each variance is bounded by 1. As a result, we have
	\begin{equation*}
		\mathbb{V}[\hat{L}_{t_i}^N] \le \frac{1}{N} + \max_{k \ne l} \left|\cov \left(\mathbbm{1}_{\{ \min_{j < i} \hat{Y}_{t_j}^{(k)}  > 0 \}}, \mathbbm{1}_{\{ \min_{j < i} \hat{Y}_{t_j}^{(l)}  > 0 \}} \right) \right| \xrightarrow[N \to \infty]{}  0.
	\end{equation*}
\end{proof}
\end{appendices}

\bibliographystyle{apalike}
\bibliography{main}

\end{document}